\renewcommand{\PrintDOI}[1]{\href{http://dx.doi.org/\detokenize{#1}}{doi: \detokenize{#1}}%
  \IfEmptyBibField{pages}{, (to appear in print)}{}}
\def\commutatif{\ar@{}[rd]|{\circlearrowleft}}
\newcommand{\eq}[1][r]
   {\ar@<-3pt>@{-}[#1]
    \ar@<-1pt>@{}[#1]|<{}="gauche"
    \ar@<+0pt>@{}[#1]|-{}="milieu"
    \ar@<+1pt>@{}[#1]|>{}="droite"
    \ar@/^2pt/@{-}"gauche";"milieu"
    \ar@/_2pt/@{-}"milieu";"droite"}
\def\dar[#1]{\ar@<2pt>[#1]\ar@<-2pt>[#1]}
\newcommand{\bigon}[4][r]{
    \ar@/^1pc/[#1]^{#2}_*=<0.3pt>{}="HAUT"
    \ar@/_1pc/[#1]_{#3}^*=<0.3pt>{}="BAS"
    \ar@{=>} "HAUT";"BAS" ^{#4}
  }
\newcommand{\bigons}[6][r]{  
    \ar@/^2pc/[#1]^{#2}_*=<0.3pt>{}="HAUT"
    \ar@{}    [#1]     ^*=<0.3pt>{}="MILIEUHAUT"
                       _*=<0.3pt>{}="MILIEUBAS"
    \ar[#1]_(0.3){#3}                  
    \ar@/_2pc/[#1]_{#4}^*=<0.3pt>{}="BAS"
    \ar@{=>} "HAUT";"MILIEUHAUT" ^{#5}
    \ar@{=>} "MILIEUBAS";"BAS" ^{#6}
  }
\newtheorem{thm}{Theorem}[section]
\theoremstyle{definition}
\newtheorem{df}[thm]{Definition}
\theoremstyle{remark}
\newtheorem{ex}[thm]{Example}
\newcommand\Hom{\operatorname{Hom}}
\newcommand{\Z}{\mathbb{Z}}
\newcommand\rTo{\longrightarrow}
\let\cal\mathcal
\title[Quandle Coloring Quivers of general Torus links]{Quandle Coloring Quivers of general Torus links by dihedral quandles}
\author{Mohamed Elhamdadi} 
\address{Department of Mathematics, 
University of South Florida, Tampa, FL 33620} 
\email{emohamed@usf.edu}
\author{Brooke Jones} 
\address{Department of Mathematics, 
University of South Florida, Tampa, FL 33620} 
\email{brookejones1@usf.edu} 
\author{Minghui Liu} 
\address{Florida College, Temple Terrace, FL 33617} 
\email{lium@floridacollege.edu}
\begin{document}

\maketitle

\begin{abstract}
We completely characterize the coloring quivers of general torus links by dihedral quandles by first exhausting all possible numbers of colorings, followed by determining the interconnections between colorings in each case.
The quiver is obtained as function of the number of colorings.
The quiver always contains complete subgraphs, in particular a complete subgraph corresponding to the trivial colorings, but the total number of subgraphs in the quiver and the weights of their edges varies depending on the number of colorings. 
\end{abstract}


\section{Introduction}
 The concept of a quiver and its representations was introduced in \cite{Gabriel} by P. Gabriel in 1972.  A quiver is an oriented graph; precisely, it is a pair $Q=(V,E)$, where $V$ is a finite set of vertices and $E$ is a finite set of arrows between them.  Quivers have been used in many areas of mathematics such as representation theory of finite dimensional algebras, ring theory (Gabriel's theorem), Lie algebras (Dynkin diagrams) and quantum groups among others.
A quiver structure on the set of quandle colorings of an oriented knot was introduced in \cite{CN} where some enhancements of the counting invariant were obtained from the quiver structure.  In \cite{BC}, quandle coloring quivers of $(2, q)$-torus links by dihedral quandles were studied.  The case of $(3, q)$-torus links was investigated in \cite{ZL}.  The main goal of this paper is to generalize the results of \cites{BC,ZL} to  any torus link $T(p,q)$. To give an overview of the process, we decided to first cover the cases T(5,q) and T(7,q) before the general case T(p,q).  In investigating dihedral quandle coloring quivers of general torus links, we first exhaust all possible numbers of colorings of the torus links by dihedral quandles $R_n$. The quiver is obtained as function of the number of colorings. 
Precisely, (1) if the number of colorings is $n$ then the quiver is the complete graph on $n$ vertices with each edge having weight $n$. (2) if the number of colorings is $pn$ then the quiver graph is composed of two complete graphs, one graph on $n$ vertices and the other on $(p-1)n$ vertices, such that there is a directed edge with weight $\frac{n}{p}$ from one graph to the other. (3) if the number of colorings is $2^{p-1}n$, then the quiver contains $2^{p-1}$ complete subgraphs of order $n$. (4) if the number of colorings is $n^p$, we obtain $\frac{n^p-n}{n(n-1)}$ disjoint subgraphs with vertices each having edges to a complete subgraph on $n$ vertices. While some of the complete subgraphs of the quivers are of different orders, we also find that the weights of the edges between them vary depending on the number of colorings.

The paper is organized as follows.  In Section~\ref{Rev} we review the basics of quandles and link colorings. Section~\ref{Ex} gives the quandle coloring quivers of the torus links $T(5,q)$ and $T(7,q)$.  In Section~\ref{Gen}, we give the main result of the paper in which we characterize the quandle coloring quivers of general torus links $T(p,q)$, where $p$ is prime.

\section{A review of Quandles and link colorings}\label{Rev}
In this section, we collect some basics about quandles that we will need through the paper.  We begin with the following definition taken from \cites{EN, Joyce, Matveev}.

\begin{df} \label{quandledef}
	A {\it quandle} is a set $X$ with a binary operation 

$*:  X\times X  \rTo X$ 
satisfying the following three axioms: 
\begin{itemize}

\item[(i)]({\it right distributivity}) for all $x,y,z\in X$, we have $(x* y)* z=(x* z)* (y* z)$;
\item[(ii)] ({\it invertibility}) for all $x\in X$, the map $R_x:X\rTo X$ sending $y$ to $y* x$ is a bijection;
\item[(iii)] ({\it idempotency}) for all $x\in X$, $x* x=x.$
\end{itemize}
\end{df}
Since axiom (ii) states that for each $y \in X$, the map $R_y$ is invertible, we will denote $R_y^{-1}(x)$ by $x\Bar{*}y$. When the binary operation satisfies $(x* y)* y=x$ for all $x, y \in X$, then the quandles is called a {\it kei} (or involutive quandle).  
Let $n$ be a positive integer. For $x, y \in \Z_n$ (integers modulo $n$), define $x * y = 2y-x \pmod{n}$. Then the operation $*$ defines a quandle structure  called {\it dihedral quandle} and is denoted $R_n$.

Let $D$ be a diagram of an oriented link $K$ and let $X$ be a quandle. A {\it coloring} of $D$ by the quandle $X$ is a map ${\cal C}: {\cal A} \rightarrow X$
 from the set of arcs ${\cal A}$ of the diagram $D$ to $X$ such that the
 image of the map satisfies the relation depicted in
 Figure~\ref{fig:posneg_crossing_quandle} at each crossing. 
For more on colorings of knots by quandles the reader can consult for example \cites{EN, Joyce, Matveev}.

\begin{figure}[H]
    \centering
    \includegraphics[width=3.7in]{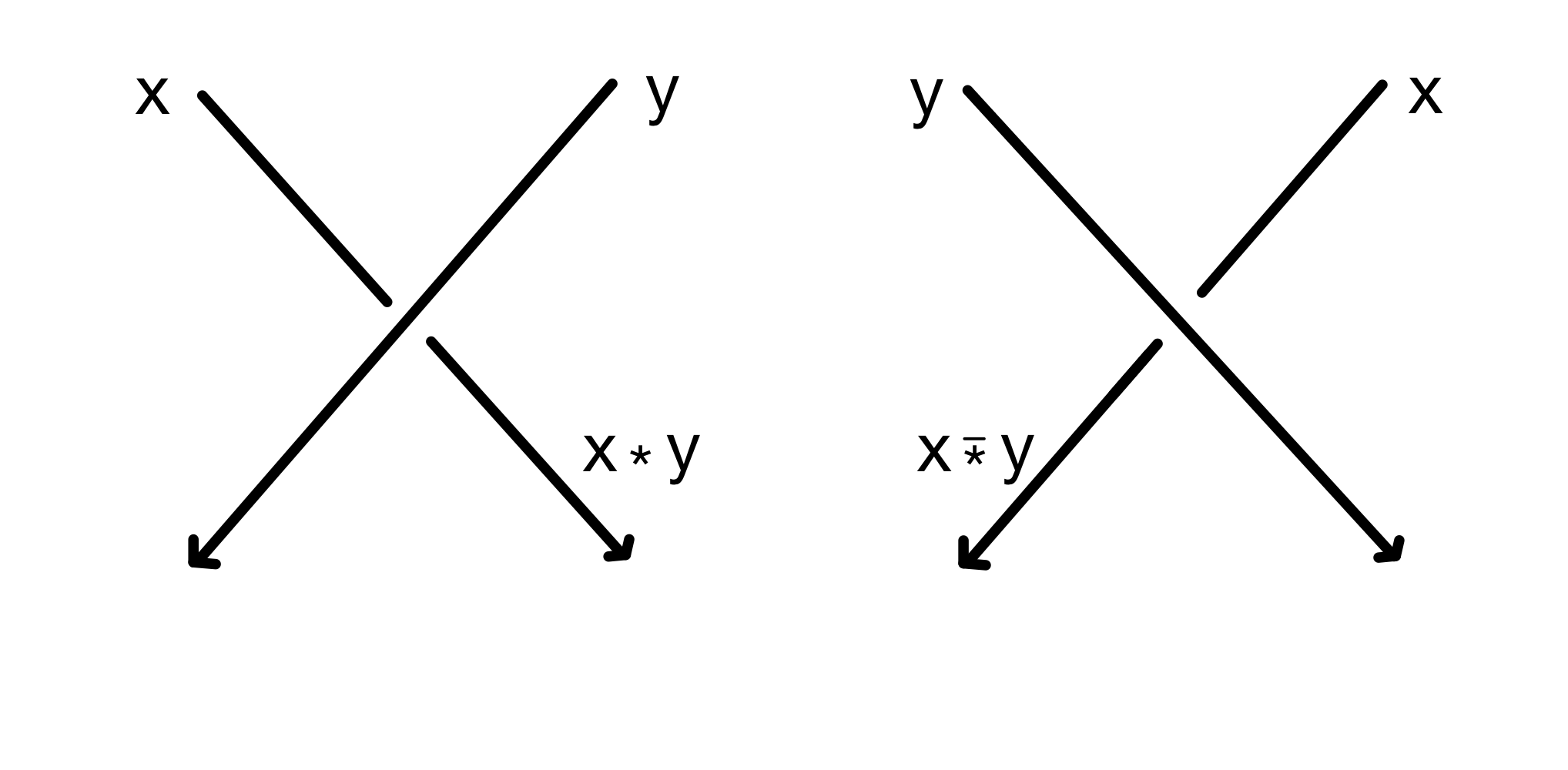}

    \caption{Colorings of arcs at positive and negative crossings}
    \label{fig:posneg_crossing_quandle}
\end{figure}

For any link $K$, there is an associated fundamental quandle $Q(K)$ of the knot $K$ given by presentation with generators corresponding to the arcs of a diagram $D$ of $K$ and quandle relations at crossings of $D$ as in Figure~\ref{fig:posneg_crossing_quandle}. The set of
colorings of a knot $K$ by a quandle $X$, then, is in
one-to-one correspondence with the set of quandle homomorphisms from
the fundamental quandle $Q(K)$ of $K$ to $X$.
For example, the fundamental quandle of \emph{figure eight} knot can be obtained from Figure~\ref{fig:figure8knot}.  It is given by $Q(4_1)=\langle x,y,z,w; \;x \Bar{*}z=y, z \Bar{*}x=w, w*y=x,y*w=z  \rangle= \langle x,y,z; \; y*z=x, (z \Bar{*}x)*y=x, y*(z \Bar{*}x)=z\rangle $.  
\vspace{-.3cm}
\begin{figure}[h]
    \centering
    \includegraphics[width=2.2in, height=2.2in]{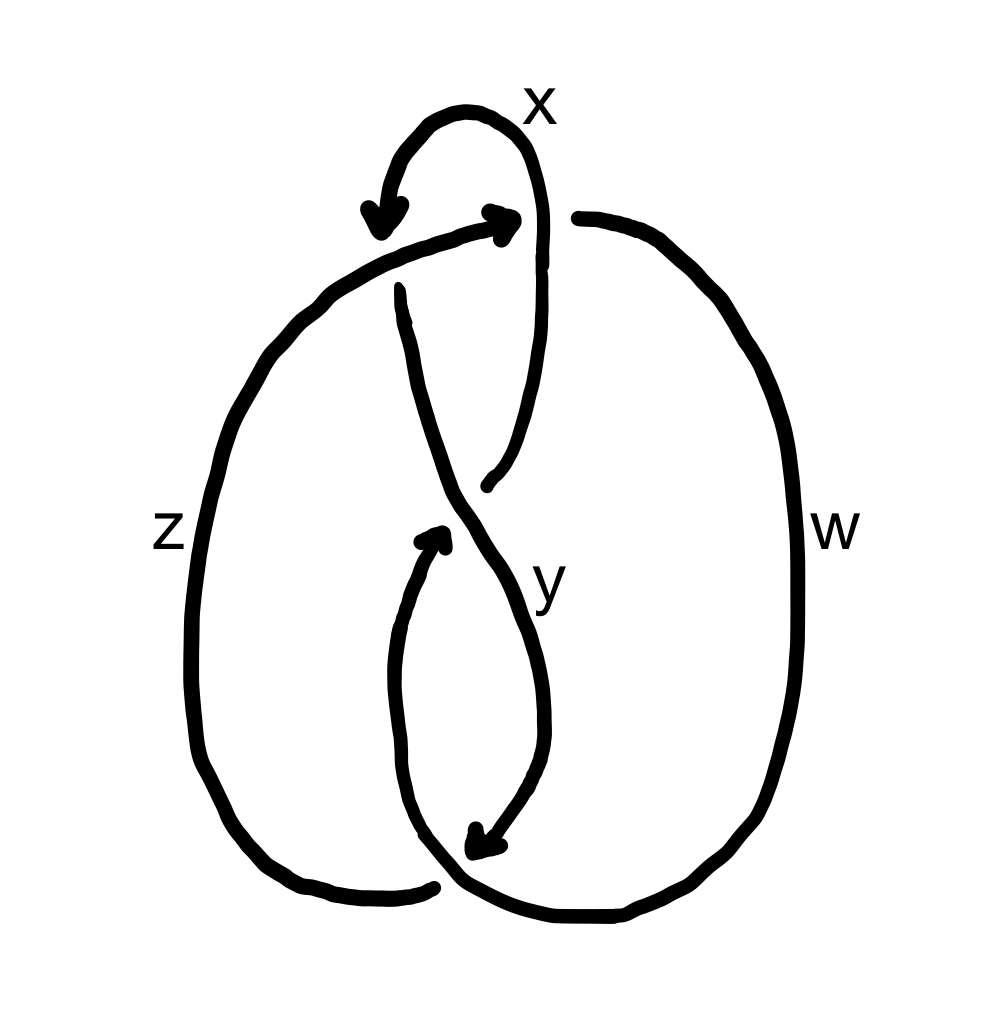}
    \caption{Oriented figure-8 knot with labeled arcs}
    \label{fig:figure8knot}
\end{figure}

\vspace{-.1cm}

The set of quandle homomorphisms from the fundamental quandle $Q(K)$ to the quandle $X$ is denoted by $\textup{Hom}(Q(K),X)$.  This set has been used to define computable link invariants, for example, the cardinality of the set $\textup{Hom}(Q(K),X)$ is known as the \emph{quandle counting invariant} \cite{EN}.

\textbf{Notations:}
Using a similar notation as in \cite{ZL}, we denote the directed complete graph on $n$ vertices with $\omega$ total number of edges between any two vertices in the graph as $(\overleftrightarrow{K_n}, \hat \omega)$. When two graphs $G_1$ and $G_2$ have disjoint vertex sets and edge sets, and there are $d$ edges from each vertex of $G_2$ to each vertex of $G_1$, the graph will be denoted as $G_1\overleftarrow{\nabla}_{\hat{d}}G_2$. Lastly, the disjoint union of $m$ copies of the graph $G$ will be denoted as $\bigsqcup\limits_{m}G$.

 \textbf{Known results:}
The only cases where quandle coloring quivers for torus links are completely known are the \emph{two} cases of $T(2,q)$ and $T(3,q)$.  In \cite{BC} the structure of the quandle coloring quiver of a $T(2,q)$ with respect to a dihedral quandle of order $n$, denoted $R_n$, was given.  In \cite{ZL} the coloring quiver was studied for $T(3,q)$. We give the known results for $\mathcal{Q}_{R_n} (T(p,q))$ for $p=2,3$ in the table below.
\\

\begin{table}[H]
  \centering
  \scalebox{1}{%
\begin{tabular}{|c| c| c|} 
 \hline
 Number of colorings $N$ & Quivers when $p=2$ & Quivers when $p=3$ \\ [0.5ex] 
 \hline\hline
 When $N=n$ &$(\overleftrightarrow{K_n}, \hat n)$ &$(\overleftrightarrow{K_n}, \hat n)$  \\
 \hline
  When $N=pn$ &$(\overleftrightarrow{K_n}, \hat n)\overleftarrow{\nabla}_{\hat{\frac{n}{2}}}\left(\overleftrightarrow{K_{n}}, \hat {\frac{n}{2}}\right)$  & $(\overleftrightarrow{K_n}, \hat n)\overleftarrow{\nabla}_{\hat{\frac{n}{3}}}\left(\overleftrightarrow{K_{2n}}, \hat {\frac{n}{3}}\right)$ \\ 
 \hline
  When $N=n\cdot2^{p-1}$&$(\overleftrightarrow{K_n}, \hat n)\overleftarrow{\nabla}_{\hat{\frac{n}{2}}}\left(\overleftrightarrow{K_{n}}, \hat {\frac{n}{2}}\right)$  &  $(\overleftrightarrow{K_n}, \hat n)\overleftarrow{\nabla}_{\hat{\frac{n}{2}}}\big[\bigsqcup \limits_{3}\left(\overleftrightarrow{K_{n}}, \hat {\frac{n}{2}}\right)\big]$    \\
 \hline
 When $N=n^p$& $(\overleftrightarrow{K_n}, \hat n)\overleftarrow{\nabla} \left(\overleftrightarrow{K_{n(n-1)}}, \hat {1}\right)$  &  $(\overleftrightarrow{K_n}, \hat n)\overleftarrow{\nabla} \big[\bigsqcup\limits_{n+1}\left(\overleftrightarrow{K_{n(n-1)}}, \hat {1}\right)\big]$   \\
 [1ex] 
 \hline
\end{tabular}
}
\caption{Quandle coloring quivers of $T(2,q)$ \cite{BC} and $T(3,q)$ \cite{ZL} by $R_n$.}
\end{table}
{\textbf{Our results:}  We solve the problem of determining the quandle coloring quivers of a general torus links $T(p,q)$ by dihedral quandles.  We give a summary of the result in the following chart.  For the details see the sections below. We begin by deriving the quandle coloring quivers in the $p=5$ and $p=7$ cases to give an overview of the process, then we generalize to $T(p,q)$ for any prime $p$.
}

\begin{table}[H]
  \centering
  \scalebox{1}{%
\begin{tabular}{|c| c| } 
 \hline
 Number of colorings $N$ & Quivers when $p$ is a prime \\ [0.5ex] 
 \hline\hline
 When $N=n$ &$(\overleftrightarrow{K_n}, \hat n)$  \\
 \hline
  When $N=pn$ & $(\overleftrightarrow{K_n}, \hat n)\overleftarrow{\nabla}_{\hat{\frac{n}{p}}}\left(\overleftrightarrow{K_{(p-1)n}}, \hat {\frac{n}{p}}\right)$ \\ 
 \hline
  When $N=n\cdot2^{p-1}$ &  $(\overleftrightarrow{K_n}, \hat n)\overleftarrow{\nabla}_{\hat{\frac{n}{2}}}\big[\bigsqcup\limits _{2^{p-1}-1}\left(\overleftrightarrow{K_{n}}, \hat {\frac{n}{2}}\right)\big]$    \\
 \hline
 When $N=n^p$ &  $(\overleftrightarrow{K_n}, \hat n)\overleftarrow{\nabla} \big[\bigsqcup\limits_{\frac{n^p-n}{n(n-1)}}\left(\overleftrightarrow{K_{n(n-1)}}, \hat {1}\right)\big]$   \\
 [1ex] 
 \hline
\end{tabular}
}
\caption{Quandle coloring quivers of $T(p,q)$ by $R_n$ (our main result).}
\end{table}

\section{Quandle Coloring Quivers of $(5,q)$ and $(7,q)$-Torus links with Dihedral Quandles}\label{Ex}
To further the understanding of deriving the quandle coloring quivers of $T(p,q)$-torus links with dihedral quandles, we go through the process of determining the number of colorings 
of $T(p,q)$ by the dihedral quandle $R_n$ and the quivers when $p=5$ and when $p=7$.

\noindent
{\textbf{Quandle Coloring Quivers of $(5,q)$-Torus links with Dihedral Quandles:}}
Using the braid form $(\sigma_1 \sigma_2 \sigma_3\sigma_4)^q$ of the torus link $T(5,q)$ as shown in Figure~\ref{BraidT5q}, we calculate $N$, where $N$ is the number of colorings of $T(5,q)$ by the dihedral quandle $R_n$, for varying values of $q$. 

\begin{figure}[H]
    \centering
    \includegraphics[height=2.4in, width=2.8in]{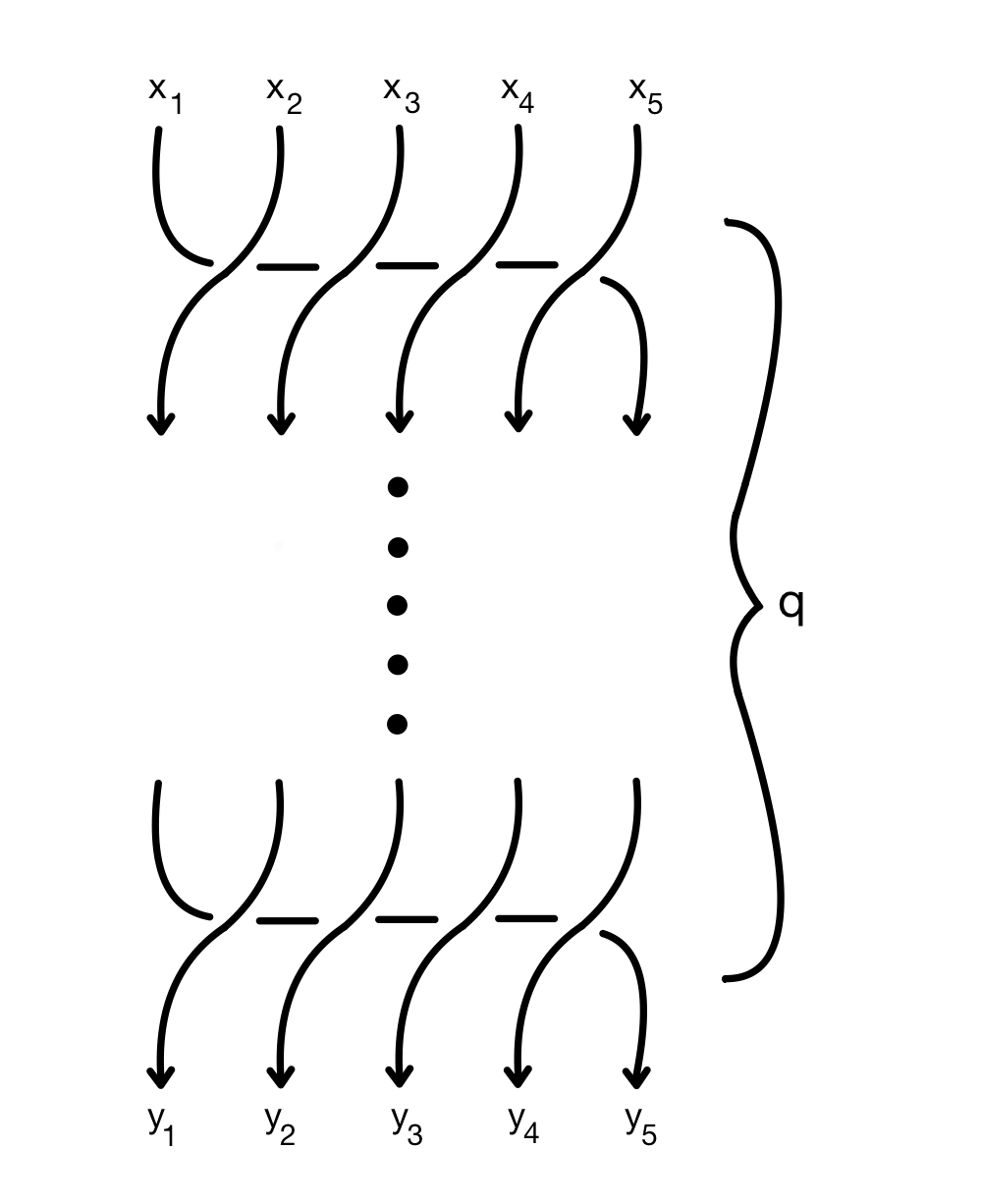}
    \caption{Coloring of the braid $(\sigma_1 \sigma_2 \sigma_3 \sigma_4)^q$ whose closure is $T(5,q)$}
    \label{BraidT5q}
\end{figure}

Let $f$ be a coloring of the torus link $T(5,q)$ 
with $f(x_i)=y_i$ for $i=1,...,5$ as in Figure~\ref{BraidT5q}. Coloring the braid $(\sigma_1 \sigma_2 \sigma_3 \sigma_4)^q$ by the dihedral quandle and using the fact that $f(x_i)=y_i$ gives the following equation:

\begin{equation}\label{eq:1}
    y_i=y_5+(-1)^{q+1}y_q+(-1)^qy_{i+q}
\end{equation}
    
Now, using equation~(\ref{eq:1}), we calculate $N$ for the following values of $q$:

\begin{enumerate}
    \item Let $q=10k$. 
     A straightforward computation gives that all $y_i$ are free variables and thus the number of colorings is $N=n^5$.


     
    \item  Let $q=10k+1$. 
The computation gives $y_1=y_2=y_3=y_4=y_5$. 
Therefore, $N=n$.

    \item Let $q=10k+2$. The computation gives $5(y_i-y_j)=0\mod n$. Therefore when $\gcd(5,n)=1$, we have $N=n$, and when $\gcd(5,n)=5$, we have $N=5n$.

    \item Let $q=10k+3$.
 The computation gives $y_1=y_2=y_3=y_4=y_5$. Therefore, $N=n$. 

    \item Let $q=10k+4$. The computation gives $5(y_i-y_j)=0\mod n$. Therefore when $\gcd(5,n)=1$, we have $N=n$, and when $\gcd(5,n)=5$, we have $N=5n$.

    \item Let $q=10k+5$.
    The computation gives $2(y_i-y_j)=0\mod n$. Therefore when $\gcd(2,n)=1$, we have $N=n$, and when $\gcd(2,n)=2$, we have $N=2^4n$.

    \item Let $q=10k+6$. The computation gives $5(y_i-y_j)=0\mod n$. Therefore when $\gcd(5,n)=1$, we have $N=n$, and when $\gcd(5,n)=5$, we have $N=5n$.

    \item Let $q=10k+7$.
The computation gives $y_1=y_2=y_3=y_4=y_5$, and so $N=n$.

    \item Let $q=10k+8$.
    The computation gives $5(y_i-y_j)=0\mod n$. Therefore when $\gcd(5,n)=1$, we have $N=n$, and when $\gcd(5,n)=5$, we have $N=5n$.

    \item Let $q=10k+9$.
The computation gives $y_1=y_2=y_3=y_4=y_5$, and so $N=n$.
\end{enumerate}
We summarize these results in the following table below.
\begin{table}[h]
  \centering
  \scalebox{1}{%
\begin{tabular}{|c| c| } 
 \hline
 $q$ &
 Number of colorings $N$ when $p=5$\\ [0.5ex] 
 \hline\hline
  When $q\neq0\mod p$ is odd& $N=n$  \\
 [.5ex] 
 \hline
   When $q\neq 0\mod p$ is even& $N=n\cdot\gcd(5,n)$  \\
 \hline
 When $q=p\mod 2p$ & When $n$ is even, $N=2^4n$. When $n$ is odd, $N=n$   \\
 \hline
  When $q=0\mod 2p$ & $N=n^5$ \\ 
 \hline

\end{tabular}
}
\caption{Number of colorings of $T(5,q)$ by $R_n$}
\end{table}


We characterize in this section all the quandle coloring quivers of the torus links $T(5,q)$ with the dihedral quandle $R_n$.

In the following four theorems we let $T(5,q)$ be a torus link and $R_n $ be the dihedral quandle. 
\begin{thm}\label{Thm3.1}
        If $|Hom(T(5,q), R_n)|=n$, then the full quandle coloring quiver is the complete directed graph: \[
    \mathcal{Q}_{R_n} (T(5,q))=(\overleftrightarrow{K_n}, \hat n).
    \]
\end{thm}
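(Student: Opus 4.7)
The plan is to analyze the structure of both the coloring set and the endomorphism monoid of $R_n$, then combine them. First, from the case-by-case analysis preceding the theorem, whenever $|\Hom(T(5,q), R_n)| = n$ the coloring equations coming from the braid $(\sigma_1\sigma_2\sigma_3\sigma_4)^q$ (together with the relevant parity and divisibility conditions on $q$ and $n$) force $y_1 = y_2 = y_3 = y_4 = y_5$. Consequently, the $n$ colorings of $T(5,q)$ by $R_n$ are precisely the constant colorings $\{f_c : c \in \mathbb{Z}_n\}$, where $f_c$ sends every arc of the diagram to $c$. These are exactly the vertices of the quiver.

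Second, I would recall the endomorphism monoid of $R_n$. A direct computation shows that every affine map $\phi_{a,b}(x) = ax + b$ (with $a,b \in \mathbb{Z}_n$) is a quandle homomorphism, since $\phi_{a,b}(2y-x) = 2ay - ax + b = 2\phi_{a,b}(y) - \phi_{a,b}(x)$. Conversely, any endomorphism $\phi$ satisfies $\phi(2y-x) = 2\phi(y)-\phi(x)$, and setting $b := \phi(0)$ and $\psi := \phi - b$ yields $\psi(2y - x) = 2\psi(y) - \psi(x)$ with $\psi(0) = 0$, from which induction gives $\psi(k) = k\psi(1)$ for all $k \in \mathbb{Z}_n$. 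Hence $\End(R_n) = \{\phi_{a,b} : a,b \in \mathbb{Z}_n\}$ has exactly $n^2$ elements.

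Finally, the full quandle coloring quiver places a directed edge from $f$ to $\phi \circ f$ for each $\phi \in \End(R_n)$. Since $\phi_{a,b} \circ f_c = f_{ac+b}$, the number of endomorphisms sending a trivial coloring $f_c$ to a prescribed trivial coloring $f_{c'}$ is the number of pairs $(a,b) \in \mathbb{Z}_n \times \mathbb{Z}_n$ with $ac + b \equiv c' \pmod n$. For each of the $n$ choices of $a$, the value $b = c' - ac$ is uniquely determined, giving exactly $n$ endomorphisms realizing each ordered pair $(c, c')$. Thus the quiver has $n$ vertices and every ordered pair (including self-loops) is joined by $n$ directed edges, which is exactly $(\overleftrightarrow{K_n}, \hat n)$. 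The only step requiring any real care is the characterization of $\End(R_n)$; the remainder is an elementary counting argument following the pattern already used in \cites{BC, ZL} for the $p=2$ and $p=3$ cases.
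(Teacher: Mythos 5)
Your proposal is correct and follows essentially the same route as the paper: identify the $n$ colorings as the constant ones, use the fact that $\End(R_n)$ consists of the affine maps $x\mapsto ax+b$, and count the $n$ pairs $(a,b)$ solving $ac+b=c'$ for each ordered pair of vertices. The only difference is that you re-derive the affine form of the endomorphisms (correctly), whereas the paper simply cites \cite{EMR} for it.
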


\begin{proof}
    If $|Hom(T(5,q), R_n)|=n$, this corresponds to the $n$ trivial colorings.  Now assume that $f$ and $g$ are both trivial colorings given respectively by $f(x_i)=j$ and $g(x_i)=k$ for all $x_i \in Q(T(5,q))$.  Since any endomorphism of the dihedral quandle, $R_n$, has the form 
    \begin{equation}\label{eq:2}
        \phi(f)=af+b
    \end{equation}
    where $a,b \in R_n$ (see \cite{EMR}), then there are exactly $n$ solutions of endomorphisms $\phi$ making equation~(\ref{eq:2}) hold.  Thus we obtain the complete graph $ \mathcal{Q}_{R_n} (T(5,q))=(\overleftrightarrow{K_n}, \hat n)$.
\end{proof}

\begin{thm}\label{thm3.2}
   If $|Hom(T(5,q), R_n)|=5n$, then the full quandle coloring quiver is the directed graph: \[
    \mathcal{Q}_{R_n} (T(5,q))=(\overleftrightarrow{K_n}, \hat n)\overleftarrow{\nabla}_{\hat{d}}\left(\overleftrightarrow{K_{4n}}, \hat {d}\right)
    ,\]where $d=\frac{n}{5}$. 
\end{thm}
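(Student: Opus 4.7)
The argument has three parts: first parametrize the $5n$ colorings explicitly, next compute how a general endomorphism of $R_n$ acts on this parameter space, and finally count directed edges case by case.

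\emph{Parametrization.} The hypothesis $|\Hom(T(5,q),R_n)|=5n$ occurs precisely when $5 \mid n$ and $q \equiv 2,4,6,8 \pmod{10}$. In each such case, the relations coming from equation~(\ref{eq:1}) collapse to a single congruence of the form $5(y_{i_0}-y_5) \equiv 0 \pmod n$ for some index $i_0$, while the remaining $y_i$ are determined affinely from $y_5$ and $y_{i_0}$. Setting $m=n/5$ and choosing $k \in \{0,1,2,3,4\}$ via $y_{i_0}-y_5 = km$, each coloring is encoded by a pair $(y_5,k) \in \Z_n \times \{0,1,2,3,4\}$, in which every $y_i-y_5$ is an explicit integer multiple of $km$. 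The $n$ colorings with $k=0$ are the trivial colorings; the remaining $4n$ colorings (with $k \in \{1,2,3,4\}$) are non-trivial.

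\emph{Action of endomorphisms.} Every endomorphism of $R_n$ has the form $\phi(y)=ay+b$ with $(a,b)\in \Z_n^2$ (as in the proof of Theorem~\ref{Thm3.1}). Since $\phi$ is affine, it sends the coloring with parameters $(y_5,k)$ to the coloring with parameters $(ay_5+b,\,k')$, where $k'm \equiv akm \pmod n$, equivalently $k' \equiv ak \pmod 5$. Hence $\phi$ preserves $k=0$, and sends a non-trivial coloring $(k \in \{1,2,3,4\})$ to a trivial one if and only if $a\equiv 0 \pmod 5$.

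\emph{Edge counting and conclusion.} Trivial-to-trivial edges are exactly as in Theorem~\ref{Thm3.1} and contribute the subgraph $(\overleftrightarrow{K_n},\hat n)$; trivial-to-non-trivial contributes no edges by the previous step. For a non-trivial source $(y_5,k)$ and any trivial target $y_5'$, one needs $a\equiv 0 \pmod 5$ (giving $n/5$ choices of $a$) with $b = y_5'-ay_5$ then forced, producing $d=n/5$ edges. For a non-trivial source $(y_5,k)$ and non-trivial target $(y_5',k')$, the congruence $ak\equiv k' \pmod 5$ uniquely determines $a \pmod 5$ (since $k$ is a unit modulo $5$), again giving $n/5$ choices of $a$ and $b$ forced, hence another $d=n/5$ edges. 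Assembling these four tallies yields the claimed quiver $(\overleftrightarrow{K_n},\hat n)\overleftarrow{\nabla}_{\hat d}(\overleftrightarrow{K_{4n}},\hat d)$.

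\emph{Main obstacle.} The main delicacy is the parametrization step: the four residue classes $q\equiv 2,4,6,8 \pmod{10}$ produce different linear relations among the $y_i$, so one must verify in each case that the solution space admits a uniform description by a single $k$-parameter valued in $\Z/5$, on which the induced action of $\phi$ is exactly $k\mapsto ak \bmod 5$. Once this uniformity is secured, everything else reduces to elementary counting of affine maps modulo $5$.
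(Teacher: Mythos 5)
Your proof is correct and takes essentially the same route as the paper's: both rest on the affine form $\phi(y)=ay+b$ of endomorphisms of $R_n$ together with a three-way case analysis (trivial-to-trivial, nontrivial-to-trivial, nontrivial-to-nontrivial) yielding $n$, $\frac{n}{5}$, and $\frac{n}{5}$ edges respectively. Your explicit parametrization of the $5n$ colorings by $(y_5,k)\in\Z_n\times\Z/5\Z$ with induced action $k\mapsto ak \bmod 5$ supplies the uniformity check across the residues $q\equiv 2,4,6,8\pmod{10}$ that the paper leaves implicit (and quietly corrects the paper's inaccurate structural description of the nontrivial colorings in its Case 3), but the underlying counting argument is the same.
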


\begin{proof}
    Let $f, g \in Hom (Q(T(5,q)), R_n)$ be two vertices of the quiver.  Since any endomorphism of the dihedral quandle $R_n$ is given by equation~(\ref{eq:2}), we need to find the number of endomorphisms $\phi$ making equation~(\ref{eq:2}) hold. We know when there are $N=5n$ colorings, then $gcd(5,n)=5$. Now, since there are no edges from a trivial coloring to a non-trivial coloring,  we consider the following $3$ cases 
    \begin{enumerate}
        \item \textbf {Case 1.}
        It is clear that if $f$ and $g$ are trivial colorings, then there are $n$ possible solutions to equation~(\ref{eq:2}). Therefore, we have $n$ edges between trivial colorings

        \item \textbf{Case 2.}
        When $f$ is a nontrivial coloring, and $g$ is a trivial coloring, so $g(x_i)=k$ for all $x_i$, equation~(\ref{eq:2}) becomes $a(f(x_i)-f(x_j))=0\mod n$ for all $i,j$.Thus, we have  $\frac{n}{5}$ edges from nontrivial colorings to trivial colorings.
        
        \item \textbf {Case 3.}
        When $f$ and $g$ are both non-trivial colorings, then $f$ and $g$ are given by $g(x_1)=g(x_3)=g(x_5) \neq g(x_2)=g(x_4)$, and $5(g(x_1)-g(x_2))=0$ modulo $n$. Since the equation
$ (g(x_1)-g(x_2))-a(f(x_1)-f(x_2))=0$ in $R_n$ has  $\frac{n}{5}$ solutions for $a$, we get the result.


    \end{enumerate}
    
\end{proof}

\begin{thm}\label{thm3.3}
    If $|Hom(T(5,q), R_n)|=16n$, then the full quandle coloring quiver is the  directed graph: \[
    \mathcal{Q}_{R_n} (T(5,q))= (\overleftrightarrow{K_n}, \hat n)\overleftarrow{\nabla}_{\hat{d}}\big[\bigsqcup\limits _{15}\left(\overleftrightarrow{K_{n}}, \hat {d}\right)\big]
    ,\]where $d=\frac{n}{2}$.
     
\end{thm}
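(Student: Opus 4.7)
The plan is to parameterize each coloring explicitly, then count quandle endomorphisms $\phi : R_n \to R_n$ realizing each ordered pair. Since $|\textup{Hom}(Q(T(5,q)), R_n)| = 16n = 2^4 n$ occurs precisely when $n$ is even and $q \equiv 5 \pmod{10}$, the defining relations collapse to $2(y_i - y_j) \equiv 0 \pmod{n}$. Writing $f(x_i) = y_1 + \epsilon_i (n/2)$ with $\epsilon_1 = 0$ and $(\epsilon_2, \dots, \epsilon_5) \in \{0,1\}^4$ gives a bijection between colorings and pairs $(y_1, \vec\epsilon) \in R_n \times \{0,1\}^4$, so the $16n$ colorings partition naturally into $16$ classes of size $n$: one trivial class ($\vec\epsilon = \vec 0$) and $15$ nontrivial classes, the latter providing the $15$ copies appearing in the claim.

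As used in the proof of Theorem~\ref{Thm3.1}, every endomorphism of $R_n$ has the form $\phi(x) = ax + b$. The condition $\phi \circ f = g$ becomes $a f(x_i) + b = g(x_i)$ for all $i$; subtracting the $i = 1$ equation yields $a \epsilon_i (n/2) \equiv \delta_i (n/2) \pmod{n}$ for each $i \geq 2$, while the $i = 1$ equation determines $b$ uniquely from $a$. Since $a \cdot (n/2) \equiv n/2 \pmod{n}$ when $a$ is odd and $a \cdot (n/2) \equiv 0 \pmod{n}$ when $a$ is even, the compatibility condition reduces to the single requirement $\delta_i \equiv a \epsilon_i \pmod{2}$ for every $i$, with $a$ ranging over either parity class ($n/2$ values each).

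I would then split into five subcases on $(\vec\epsilon, \vec\delta)$: (i) both trivial, any $a$ works, giving $n$ edges; (ii) $\vec\epsilon = \vec 0$ and $\vec\delta \neq \vec 0$, no $a$ works, $0$ edges; (iii) $\vec\epsilon \neq \vec 0$ and $\vec\delta = \vec 0$, only even $a$ works, $n/2$ edges; (iv) $\vec\epsilon = \vec\delta \neq \vec 0$, only odd $a$ works, $n/2$ edges; (v) $\vec\epsilon \neq \vec\delta$ both nontrivial, no $a$ works, $0$ edges. Aggregating the $n$ colorings per class with these edge counts gives exactly the structure $(\overleftrightarrow{K_n}, \hat n) \overleftarrow{\nabla}_{\hat{d}} \big[\bigsqcup\limits_{15} (\overleftrightarrow{K_n}, \hat{d})\big]$ with $d = n/2$.

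The main obstacle is case (v): I must rule out any $a \in R_n$ satisfying $\delta_i \equiv a \epsilon_i \pmod{2}$ for all $i$ when $\vec\epsilon \neq \vec\delta$ are both nontrivial (and both normalized by $\epsilon_1 = \delta_1 = 0$). The parity dichotomy forces $\vec\delta = \vec\epsilon$ when $a$ is odd and $\vec\delta = \vec 0$ when $a$ is even, each contradicting the hypothesis. This step is precisely what produces a disjoint union of $15$ small cliques rather than merging them into a single larger block, so it is the heart of the theorem.
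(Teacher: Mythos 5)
Your proposal is correct and follows essentially the same route as the paper's proof: parameterizing colorings by $f(x_1)$ together with offsets in $\{0, n/2\}$, using the affine form $\phi(x)=ax+b$ of endomorphisms, and splitting on the parity of $a$ to get $n$, $n/2$, or $0$ edges between classes. Your explicit case (v) argument for why distinct nontrivial classes receive no edges is only implicit in the paper's write-up, so your version is a slightly more careful rendering of the same proof.
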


\begin{proof}
    In order to have $N=16n$ colorings, we know $n$ must be even and for any coloring $f$, $2(f(x_i)-f(x_j))=0$ for any $i,j$. The $16n$ colorings consist of $n$ trivial colorings and $15n$ nontrivial colorings. By Theorem \ref{Thm3.1}, the $n$ trivial colorings correspond to a subgraph $(\overleftrightarrow{K_n}, \hat n)$ of $\mathcal{Q}_{R_n} (T(5,q))$. Now, there are $15n$ nontrivial colorings, so let $f$ be a nontrivial coloring, thus we have for some $j$, $f(x_i)=f(x_1)+m_{i-1}$ for $1<i\leq 5$ where $m_{i-1}=0$ or $\frac{n}{2}$. Since $n$ is even and $f$ is nontrivial, there exists $i$ such that $m_{i-1}=\frac{n}{2}$. We know for some coloring $g\in Hom(Q(T(5,q)),R_n)$, $g(x_i)=g(x_1)+m'_{i-1}$ where $m'_{i-1}=0$ or $\frac{n}{2}$ . By satisfying equation~(\ref{eq:2}), we have $am_{i-1}=m'_{i-1}$ for $1<i\leq 5$. If any $m_{i-1}=0$, then $m'_{i-1}=0$. Thus, consider the $m_{i-1}$ such that $m_{i-1}=\frac{n}{2}$ and so we have $a\frac{n}{2}=0$ or $n/2$. If $a\frac{n}{2}=0$ then $a$ is even, i.e. $g$ must be trivial, and we have $n/2$ edges between nontrivial colorings to trivial colorings. If $a\frac{n}{2}=\frac{n}{2}$ then $a$ is odd, so $m_{i-1}=m'_{i-1}=\frac{n}{2}$, and we have $n/2$ edges between nontrivial colorings to nontrivial colorings. For each choice of $m_1,m_2,m_3,m_4$, there are $n$ colorings  which divides the nontrivial colorings into 15 disjoint $\left(\overleftrightarrow{K_{n}}, \hat {\frac{n}{2}}\right)$.  
      
\end{proof}

\begin{thm}\label{Thm5.4}
   If $|Hom(T(5,q), R_n)|=n^5$, where $n$ is prime, then the full quandle coloring quiver is the  directed graph: \[
    \mathcal{Q}_{R_n} (T(5,q))=(\overleftrightarrow{K_n}, \hat n)\overleftarrow{\nabla} \big[\bigsqcup\limits_{m}\left(\overleftrightarrow{K_{n(n-1)}}, \hat {1}\right)\big],
    \] where $m=\frac{n^5-n}{n(n-1)}$.
\end{thm}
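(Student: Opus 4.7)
The plan is to exploit the fact that, with $n$ prime, every quandle endomorphism of $R_n$ is affine, $\phi(y)=ay+b$ (equation~(\ref{eq:2})), and that the condition $|\Hom(T(5,q),R_n)|=n^5$ forces all five generators $x_1,\dots,x_5$ to carry independent values in $R_n$. Thus a coloring is identified with a $5$-tuple $(y_1,\dots,y_5)\in\Z_n^5$, and every endomorphism acts by $(a,b)\cdot(y_1,\dots,y_5)=(ay_1+b,\dots,ay_5+b)$. The $n$ trivial colorings contribute the subgraph $(\overleftrightarrow{K_n},\hat n)$ by Theorem~\ref{Thm3.1}, so the task reduces to organizing the remaining $n^5-n$ non-trivial colorings.

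The core step is an orbit-stabilizer computation. For a non-trivial coloring $f$, the stabilizer equations $ay_i+b=y_i$ force $(a-1)(y_i-y_j)=0$ for some index pair with $y_i\ne y_j$; since $n$ is prime, $y_i-y_j$ is a unit, giving $a=1$ and $b=0$. Hence the assignment $\phi\mapsto\phi\circ f$ is injective on $\End(R_n)$, and the orbit of $f$ has size $|\End(R_n)|=n^2$. Among these $n^2$ images, the trivial ones are exactly those obtained with $a=0$, giving all constant tuples $(b,\dots,b)$; this leaves $n(n-1)$ non-trivial colorings in each orbit. Restricting to non-constant $\phi$---automatically in $\Aut(R_n)$, since $a\ne 0$ is invertible mod a prime---partitions the non-trivial colorings into $m=\frac{n^5-n}{n(n-1)}$ blocks of size $n(n-1)$ each.

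From here the four edge-weight cases are mechanical. Trivial-to-trivial weight is $n$ by Theorem~\ref{Thm3.1}. Trivial-to-non-trivial has weight $0$, since any affine image of a constant tuple is constant. Non-trivial $f$ to a trivial tuple $(c,\dots,c)$ forces $a=0,\,b=c$, giving exactly one endomorphism and weight $1$. For non-trivial $f,g$, triviality of the stabilizer implies that the system $ay_i+b=y'_i$ has at most one solution; hence the weight is $1$ when $f,g$ lie in the same block and $0$ otherwise. Assembling these counts yields exactly $(\overleftrightarrow{K_n},\hat n)\overleftarrow{\nabla}\big[\bigsqcup_{m}(\overleftrightarrow{K_{n(n-1)}},\hat 1)\big]$.

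The only place that requires care, rather than routine calculation, is verifying that ``same orbit'' defines a genuine equivalence relation on non-trivial colorings. The key observation is that whenever $g=\phi\circ f$ with both $f,g$ non-trivial, the scalar $a$ cannot vanish, so $\phi\in\Aut(R_n)$ and the relation is symmetric; transitivity is by composition. With $n$ prime ensuring that every nonzero element of $\Z_n$ is a unit, no deeper obstacle remains.
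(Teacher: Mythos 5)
Your proof is correct, and it reaches the paper's conclusion by a noticeably cleaner route. The paper parametrizes a non-trivial coloring by its base value $f(x_1)$ and the difference vector $(m_1,\dots,m_4)$, then uses Fermat's little theorem (writing $m_i^{-1}=m_i^{\rho(n)-1}$) to turn the consistency of the system $am_{i-1}=m'_{i-1}$ into the chain of equalities in equation~(\ref{eq:3}); the blocks of size $n(n-1)$ are then assembled in two stages, first as $\frac{n^5-n}{n}$ copies of $(\overleftrightarrow{K_n},\hat 1)$ indexed by difference vectors and then by merging the $n-1$ blocks whose difference vectors are nonzero scalar multiples of one another. You instead observe that the affine monoid $\End(R_n)\cong\Z_n\times\Z_n$ acts on colorings coordinatewise, that a non-trivial coloring has trivial stabilizer when $n$ is prime, and that its $\End$-orbit therefore has exactly $n^2$ elements, of which $n$ are constant; the $\Aut(R_n)$-orbits of size $n(n-1)$ are then your blocks, and all edge weights ($1$ within a block, $1$ to each trivial coloring, $0$ otherwise, $n$ among trivial colorings) fall out of the injectivity of $\phi\mapsto\phi\circ f$. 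The two arguments rest on the same facts --- endomorphisms are affine and every nonzero residue mod a prime is a unit --- but your orbit--stabilizer formulation explains conceptually why the block size is $|\Aut(R_n)|=n(n-1)$ and why the blocks are disjoint (they are group orbits), whereas the paper must argue disjointness separately through its Case~2. Your explicit check that ``same block'' is symmetric because a $\phi$ carrying one non-trivial coloring to another necessarily has $a\neq 0$ is exactly the right point to isolate, and is only implicit in the paper.
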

\begin{proof}
In this case, $y_1,...,y_5$ are free elements of $R_n$, so there are $n^5$ colorings (vertices), and we need to determine the edges in this quiver. By Theorem \ref{Thm3.1} we will have the $n$ trivial colorings corresponding to the subgraph $(\overleftrightarrow{K_n}, \hat n)$. Now, there are $n^5-n$ nontrivial colorings. Let $f$ be a nontrivial coloring, thus we have $$f(x_i)=f(x_1)+m_{i-1}$$ 
where $1< i\leq 5$ and $0\leq m_{i-1}\leq n-1$. 
Since $f$ is nontrivial, there exists an index $j$ such that $m_j\neq 0$. Let $g$ be another coloring, thus we have $$g(x_i)=g(x_1)+m'_{i-1}$$
where $1<i\leq 5$ and $0\leq m'_{i-1}\leq n-1$. 
To satisfy equation~(\ref{eq:2}), we get for $1<i\leq 5$, $$am_{i-1}=m'_{i-1}$$
If any $m_{i-1}=0$, then $m'_{i-1}=0$. Thus, we need only consider the $i-1$ such that $m_{i-1}\neq 0$. Let $\rho(n)$ be the Euler function of $n$, which is prime by hypothesis, and so $m_{i-1}^{\rho(n)}=1$. Therefore, we have a unique solution (an edge) for $a$ if the following equation is satisfied for all $i-1$ such that $m_{i-1}\neq 0$ :\begin{equation}\label{eq:3}
     m'_1m^{\rho (n)-1}_1=m'_2m^{\rho (n)-1}_2=m'_3m^{\rho (n)-1}_3=...=m'_{4}m^{\rho (n)-1}_{4}\
\end{equation}
Fix $m_j$ for each $j$ such that $m_j\neq 0$, so $1\leq m_j\leq n-1$. Now, we consider the following $3$ cases 
    \begin{enumerate}
        \item \textbf {Case 1: if $m_j=m'_j$ for all $j$:}\\
        It is clear that equation~(\ref{eq:3}) is satisfied in this case. Therefore there is an edge between the $n$ vertices for each choice of $m_1,...,m_{4}$. This divides the nontrivial colorings into $\frac{n^5-n}{n}$ $(\overleftrightarrow{K_n}, \hat 1)$ subgraphs.
        
        \item \textbf {Case 2: if $m_j=m'_j$ for some $j$ and $m_k\neq m'_k$ for some $k$:}\\
        In this case, it is impossible to satisfy equation~(\ref{eq:3}), since $1=m'_jm_j^{\rho(n)-1}\neq m'_km_k^{\rho(n)-1}$. Therefore there are no edges between colorings in this case.

        \item \textbf {Case 3: if $m_j\neq m'_j$ for all $j$:}\\
        We have $n-2$ options for $m'_j$, which determines the other $m'_k$'s to satisfy equation~(\ref{eq:3}). Therefore there is an edge between each vertex of the $\frac{n^5-n}{n}$ $(\overleftrightarrow{K_n}, \hat 1)$ subgraphs from Case 1, to each vertex of $n-2$ other $\frac{n^5-n}{n}$ $(\overleftrightarrow{K_n}, \hat 1)$ subgraphs. This means we now have subgraphs $(\overleftrightarrow{K_{n(n-1)}}, \hat 1)$. We thus have $\frac{n^5-n}{n(n-1)}$ $(\overleftrightarrow{K_{n(n-1)}}, \hat 1)$ subgraphs. 

        Case 2 tells us that the $(\overleftrightarrow{K_{n(n-1)}}, \hat 1)$ subgraphs are disjoint. However, we have an edge from each nontrivial coloring to each trivial coloring since equation~(\ref{eq:3}) is always satisfied if $m'_{i-1}=0$ for all $i-1$. This gives the desired result.
        
    \end{enumerate}
\end{proof}

We give the following examples of the full quandle coloring quiver for varying values of $N$.\\

\begin{ex}
    When $N=n$, we have $\mathcal{Q}_{R_n} (T(5,q))=(\overleftrightarrow{K_n}, \hat n)$ by Theorem \ref{Thm3.1}. In Figure \ref{fig:1}, we show the graph $(\overleftrightarrow{K_5}, \hat 5)$ corresponding to the full $R_5$-quandle coloring quiver of $T(5,q)$, while Figure \ref{fig:2} shows $\mathcal{Q}_{R_{20}} (T(5,q))=(\overleftrightarrow{K_{20}}, \hat {20})$. For visual simplicity, we leave out arrows on the graph edges and remind the reader that the weight of each directed edge is $n$. \\
    \begin{figure}[h]
        \centering
        \scalebox{0.8}{
         \begin{tikzpicture}
            \graph { subgraph K_n [n=5,clockwise,radius=1.2cm, empty nodes, nodes={fill,circle,inner sep=2pt, minimum size=1pt}] };
        \end{tikzpicture}}
        \caption{$(\overleftrightarrow{K_5}, \hat 5)$}
        \label{fig:1}
    \end{figure}
    
    \begin{figure}[H]
    
        \centering
        \scalebox{0.85}{
         \begin{tikzpicture}

\foreach \i in {1,...,20}
    \node[draw, circle, fill, minimum size=1pt, inner sep=0pt] (A\i) at ({360/20*(\i-1)}:5cm) {};

\foreach \i in {1,...,20}
    \foreach \j in {\i,...,20}
        \draw[opacity=0.3] (A\i) -- (A\j);

\end{tikzpicture}}
        \caption{$(\overleftrightarrow{K_{20}}, \hat {20})$}
        \label{fig:2}
    \end{figure}

\end{ex}

\begin{ex}
    By Theorem \ref{thm3.2}, when $N=5n$, we have \[
    \mathcal{Q}_{R_n} (T(5,q))=(\overleftrightarrow{K_n}, \hat n)\overleftarrow{\nabla}_{\hat{d}}\left(\overleftrightarrow{K_{4n}}, \hat {d}\right)
    ,\]where $d=\frac{n}{5}$. The full $R_5$-quandle coloring quiver of $T(5,q)$ is thus, \[
    \mathcal{Q}_{R_5} (T(5,q))=(\overleftrightarrow{K_5}, \hat 5)\overleftarrow{\nabla}_{\hat{1}}\left(\overleftrightarrow{K_{20}}, \hat {1}\right).\]  Notice that the subgraph components of the quiver $\mathcal{Q}_{R_5} (T(5,q))$ can be seen in Figures \ref{fig:1} and \ref{fig:2}. In Figure \ref{fig:3} we draw $\mathcal{Q}_{R_5} (T(5,q))$ as a simplified graph where the blue (left) and red (right) dots represent $(\overleftrightarrow{K_5}, \hat 5)$ and $(\overleftrightarrow{K_{20}}, \hat {20})$, respectively, and the directed edge indicates a directed edge with weight 1 from every vertex of $(\overleftrightarrow{K_{20}}, \hat {20})$ to every vertex of $(\overleftrightarrow{K_5}, \hat 5)$.

    \begin{figure}[h]
        \centering
        \begin{tikzpicture}

\node[draw, circle, fill=red, minimum size=2pt, inner sep=3pt, text=white] (Red) at (3,0) {};
\node[draw, circle, fill=blue, minimum size=2pt, inner sep=3pt, text=white] (Blue) at (0,0) {};

\draw[->, thick] (Red) -- (Blue);

\end{tikzpicture}
        \caption{$\mathcal{Q}_{R_5} (T(5,q))=(\overleftrightarrow{K_5}, \hat 5)\overleftarrow{\nabla}_{\hat{1}}\left(\overleftrightarrow{K_{20}}, \hat {1}\right)$}
        \label{fig:3}
    \end{figure}
\end{ex}

\begin{ex}
    By Theorem \ref{thm3.3}, when $N=16n$, we have \[
    \mathcal{Q}_{R_n} (T(5,q))= (\overleftrightarrow{K_n}, \hat n)\overleftarrow{\nabla}_{\hat{d}}\big[\bigsqcup\limits _{15}\left(\overleftrightarrow{K_{n}}, \hat {d}\right)\big]
    ,\]where $d=\frac{n}{2}$. Thus, if $n=6$, we know $
    \mathcal{Q}_{R_6} (T(5,q))= (\overleftrightarrow{K_6}, \hat 6)\overleftarrow{\nabla}_{\hat{3}}\big[\bigsqcup\limits _{15}\left(\overleftrightarrow{K_{6}}, \hat {3}\right)\big]
    $. In Figure \ref{fig:4} we again use the simplified technique of illustrating this quiver, where the blue dot (center) represents $(\overleftrightarrow{K_6}, \hat 6)$ and each of the 15 green dots represent $(\overleftrightarrow{K_6}, \hat 3)$. Each of the directed edges from a green dot to a blue dot indicates that there is a directed edge from each vertex of $(\overleftrightarrow{K_6}, \hat 3)$ to every vertex of $(\overleftrightarrow{K_6}, \hat 6)$.

    \begin{figure}[h]
        \centering
        \scalebox{0.7}{
        \begin{tikzpicture}

\node[draw, circle, fill=blue, minimum size=1pt, inner sep=2pt, text=white] (Blue) at (0,0) {} ;

\foreach \i in {1,...,15} {
    \node[draw, circle, fill=green, minimum size=1pt, inner sep=2pt, text=white] (Green\i) at ({360/15*(\i-1)}:4cm) {};
    \draw[-{Stealth[length=7pt, width=3pt]}, line width=1pt] (Green\i) -- (Blue);
}

\end{tikzpicture}}
        \caption{$
    \mathcal{Q}_{R_6} (T(5,q))= (\overleftrightarrow{K_6}, \hat 6)\overleftarrow{\nabla}_{\hat{3}}\big[\bigsqcup\limits _{15}\left(\overleftrightarrow{K_{6}}, \hat {3}\right)\big]
    $}
        \label{fig:4}
    \end{figure}
\end{ex}


\textbf{Quandle Coloring Quivers of $(7,q)$-Torus links with Dihedral Quandles}
We calculate the quiver of $T(7,q)$ when $q=14k,14k+1,14k+2,\ldots,14k+13$.
Let $N$ be the number of colorings of $T(7,q)$ by $R_n$.

Using the braid form $(\sigma_1 \sigma_2 \ldots \sigma_6)^q$ of $T(7,q)$, we calculate $N$ for varying values of $q$. Let $f$ be a coloring of $T(7,q)$ by $R_n$, and $f(x_i)=y_i$ for $i=1,...,7$. Coloring the braid $(\sigma_1 \sigma_2 \sigma_3 \sigma_4 \sigma_5 \sigma_6)^q$ by the dihedral quandle and using the fact that $f(x_i)=y_i$ gives the following equation:
\begin{equation}\label{eq:4}
    y_i=y_7+(-1)^{q+1}y_q+(-1)^qy_{i+q}
\end{equation}

Now, using equation~(\ref{eq:4}) and following a similar process as in the $p=5$ case, we summarize the results for $p=7$ in the following table below:

\begin{table}[h]
  \centering
  \scalebox{1}{%
\begin{tabular}{|c| c| } 
 \hline
 $q$ & Number of colorings $N$ when $p=7$\\ [0.5ex] 
 \hline\hline
 When q=p & When $n=7k$ for $k\geq 2$, $N=2^6n$   \\
 [1ex]
 \hline
  When q=p & When gcd(7,n)=1 or n=7,  $N=n$ \\ 
  [1ex]
 \hline
  When q is even& $N=n\cdot\gcd(7,n)$  \\
  [1ex]
 \hline
 When q is odd, not p& $N=n$  \\
 [1ex] 
 \hline
\end{tabular}
}
\caption{Number of colorings of $T(7,q)$ by $R_n$}
\end{table}


\begin{thm}\label{Thm3.5}
        If $|Hom(T(7,q), R_n)|=n$, then the full quandle coloring quiver is the complete directed graph: \[
    \mathcal{Q}_{R_n} (T(7,q))=(\overleftrightarrow{K_n}, \hat n).
    \]
\end{thm}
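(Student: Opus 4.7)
The plan is to mirror the argument of Theorem~\ref{Thm3.1} verbatim, since the statement is structurally identical: once the number of colorings equals $n$, every coloring must be a constant (trivial) coloring, and the edge count between any two vertices depends only on the endomorphism structure of $R_n$, not on the particular torus link.

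First I would observe that any quandle always admits the $n$ trivial colorings $f_j(x_i)=j$ for $j\in R_n$, so the hypothesis $|\mathrm{Hom}(Q(T(7,q)),R_n)|=n$ forces these to be all the colorings. In particular, the $n$ vertices of $\mathcal{Q}_{R_n}(T(7,q))$ are precisely the trivial colorings $f_1,\dots,f_n$.

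Next I would count, for an arbitrary ordered pair $(f_j,f_k)$ of trivial colorings, the number of endomorphisms $\phi\in\mathrm{End}(R_n)$ realizing $\phi\circ f_j=f_k$, since this is by definition the multiplicity of the directed edge from $f_j$ to $f_k$ in the quiver. Using the classification of endomorphisms of the dihedral quandle from \cite{EMR}, any $\phi\in\mathrm{End}(R_n)$ has the form $\phi(x)=ax+b$ with $a,b\in R_n$ (cf.\ equation~(\ref{eq:2})). The condition $\phi(j)=k$ becomes $aj+b=k\bmod n$, and for each of the $n$ choices of $a\in R_n$ there is a unique solution $b=k-aj$. Hence there are exactly $n$ edges from $f_j$ to $f_k$, for every ordered pair.

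Therefore the quiver is the complete directed graph on $n$ vertices with every directed edge carrying weight $n$, i.e.\ $(\overleftrightarrow{K_n},\hat n)$. The only subtle point is justifying that the hypothesis really does force triviality, but this is immediate from the fact that the $n$ trivial colorings always exist, so the main obstacle is essentially non-existent; the proof reduces to a short citation of \cite{EMR} together with the counting argument above.
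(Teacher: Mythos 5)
Your proposal is correct and follows essentially the same route as the paper's own proof: note that the $n$ colorings must all be trivial (constant), then count the affine endomorphisms $\phi(x)=ax+b$ of $R_n$ satisfying $aj+b=k$, obtaining $n$ solutions and hence weight $n$ on every directed edge. Your write-up actually makes explicit two small points the paper leaves implicit, namely why the hypothesis forces triviality and why the equation has exactly $n$ solutions.
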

\begin{proof}
    If $|Hom(T(7,q), R_n)|=n$ (this corresponds to trivial colorings).  Now assume that $f$ and $g$ are both trivial colorings given respectively by $f(x_i)=j\; \textit{and}\; g(x_i)=k$ for all $x_i \in Q(T(7,q))$.  Since any endomorphism of the dihedral quandle $R_n$ has the form $\phi(f)=af+b$, where $a,b \in R_n$, then there are exactly $n$ solutions of functions $\phi$ making the equation $\phi(f)=g$ hold.  Thus we obtain the complete graph $ \mathcal{Q}_{R_n} (T(7,q))=(\overleftrightarrow{K_n}, \hat n)$.
\end{proof}

\begin{thm}\label{Thm3.6}
   If $|Hom(T(7,q), R_n)|=7n$, then the full quandle coloring quiver is the directed graph: \[
    \mathcal{Q}_{R_n} (T(7,q))=(\overleftrightarrow{K_n}, \hat n)\overleftarrow{\nabla}_{\hat{d}}\left(\overleftrightarrow{K_{6n}}, \hat {d}\right)
    ,\]where $d=\frac{n}{7}$. 
\end{thm}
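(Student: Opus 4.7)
The plan is to mirror the proof of Theorem~\ref{thm3.2}, replacing $p=5$ by $p=7$ throughout. First I would observe that $|Hom(T(7,q),R_n)|=7n$ forces $\gcd(7,n)=7$, so every nontrivial coloring $f$ has all its arc differences $f(x_i)-f(x_j)$ lying in the subgroup $(n/7)\mathbb{Z}_n$. Using that every endomorphism of $R_n$ has the form $\phi(y)=ay+b$ (see \cite{EMR}) and that $\phi\circ f=g$ is equivalent to the system $a\bigl(f(x_i)-f(x_j)\bigr)\equiv g(x_i)-g(x_j)\pmod{n}$ for all pairs $(i,j)$ (with $b$ then forced to be $g(x_1)-af(x_1)$), the problem reduces to counting admissible $a\in R_n$ in each of three cases.

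Next I would run the three-case split. \textbf{Case 1} ($f,g$ both trivial): the constraints on $a$ are vacuous, giving $n$ admissible values and thus the $(\overleftrightarrow{K_n},\hat n)$ subgraph of Theorem~\ref{Thm3.5}. \textbf{Case 2} ($f$ nontrivial, $g$ trivial): the constraint becomes $a\cdot k_{ij}(n/7)\equiv 0\pmod{n}$ for all $(i,j)$; since $f$ is nontrivial some $k_{ij}\in(\mathbb{Z}/7)^{*}$ occurs, forcing $7\mid a$ and yielding exactly $n/7$ admissible values, which supplies the $\overleftarrow{\nabla}_{\hat{n/7}}$ edges. \textbf{Case 3} ($f,g$ both nontrivial): writing $f(x_i)-f(x_j)=\alpha_{ij}(n/7)$ and $g(x_i)-g(x_j)=\beta_{ij}(n/7)$ with $\alpha_{ij},\beta_{ij}\in\mathbb{Z}/7$, the system becomes $a\alpha_{ij}\equiv\beta_{ij}\pmod{7}$ for all $(i,j)$; choosing any pair with $\alpha_{ij}\in(\mathbb{Z}/7)^{*}$ pins $a$ uniquely modulo $7$, which again lifts to $n/7$ values in $R_n$.

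The main obstacle I expect is the verification in Case 3 that a single value of $a\pmod 7$ simultaneously satisfies the constraints for \emph{all} pairs $(i,j)$, i.e.\ that any two nontrivial colorings really are linked by some such $\phi$. For this I would rely on the case-by-case analysis already carried out in the section from the master equation $y_i=y_7+(-1)^{q+1}y_q+(-1)^q y_{i+q}$, which in each subcase producing $7n$ colorings exhibits the nontrivial colorings as a one-parameter family $y_i=y_1+c_i\,t$ with $c_i\in\mathbb{Z}$ fixed by the braid structure and $t\in\{1,\ldots,6\}\cdot(n/7)$. Once this one-parameter shape is in hand, the ratios $\beta_{ij}/\alpha_{ij}\pmod 7$ are independent of $(i,j)$, so the count of $n/7$ is consistent across all pairs and the $6n$ nontrivial colorings assemble into a single $(\overleftrightarrow{K_{6n}},\hat{n/7})$. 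Combining the three cases then yields $\mathcal{Q}_{R_n}(T(7,q))=(\overleftrightarrow{K_n},\hat n)\overleftarrow{\nabla}_{\hat{n/7}}(\overleftrightarrow{K_{6n}},\hat{n/7})$.
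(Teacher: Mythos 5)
Your proposal is correct and follows essentially the same route as the paper: the same reduction to counting admissible $a$ in $\phi(y)=ay+b$, and the same three-case split (trivial--trivial, nontrivial--trivial, nontrivial--nontrivial) yielding $n$, $\frac{n}{7}$, and $\frac{n}{7}$ edges respectively. Your extra verification in Case 3 that the congruences $a\alpha_{ij}\equiv\beta_{ij}\pmod 7$ are simultaneously solvable--via the one-parameter shape of the nontrivial colorings--is a point the paper's proof leaves implicit, but it is a refinement of the same argument rather than a different approach.
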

\begin{proof}
     Let $f,g \in Hom (Q(T(7,q)), R_n)$ be two vertices of the quiver.  Since any endomorphism of the dihedral quandle $R_n$ is given by $\phi(f)=af+b$ (see \cite{EMR}), we need to find the number of endomorphisms $\phi$ making the equation~(\ref{eq:2}) hold. We know when there are $N=7n$ colorings, then $gcd(7,n)=7$. Now, since there are no edges from a trivial coloring to a non-trivial coloring,  we consider the following $3$ cases:
    \begin{enumerate}
        \item \textbf {Case 1.}
        It is clear that if $f$ and $g$ are trivial colorings, i.e. constant maps, then there are $n$ possible solutions to equation~(\ref{eq:2}), and we have $n$ edges between trivial colorings.
        
        \item \textbf {Case 2.}
        When $f$ is a non-trivial coloring, then for some trivial coloring $g$ we have $g(x_i)=k$ for all $x_i$, and equation~(\ref{eq:2}) becomes $a(f(x_i)-f(x_j))=0 \mod n$ for all $i,j$. Since $7(f(x_i)-f(x_j))=0\mod n$, we have $\frac{n}{7}$ from nontrivial colorings to trivial colorings.

        \item \textbf {Case 3.}
        When $f$ and $g$ are both non-trivial colorings, then $f$ and $g$ are given by $7(f(x_i)-f(x_j))=7(g(x_i)-g(x_j))=0$ modulo $n$. Since the equation
$ (g(x_i)-g(x_j))-a(f(x_i)-f(x_j))=0$ in $R_n$ has  $\frac{n}{7}$ solutions for $a$, we get the result.
        \

    \end{enumerate}
\end{proof}

\begin{thm}\label{Thm3.7}
    If $|Hom(T(7,q), R_n)|=64n$, then the full quandle coloring quiver is the  directed graph: 
    \[
    \mathcal{Q}_{R_n} (T(7,q))= (\overleftrightarrow{K_n}, \hat n)\overleftarrow{\nabla}_{\hat{d}}\big[\bigsqcup\limits _{63}\left(\overleftrightarrow{K_{n}}, \hat {d}\right)\big]
    ,\]where $d=\frac{n}{2}$.
     
\end{thm}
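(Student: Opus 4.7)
\textbf{Proof plan for Theorem~\ref{Thm3.7}.}

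The plan is to mimic the structure of the proof of Theorem~\ref{thm3.3} (the $p=5,\ N=16n$ case), since the present situation differs only in that there are now six rather than four free ``signature'' entries. First, I would observe that the hypothesis $|Hom(T(7,q), R_n)|=64n=2^{6}n$ forces $n$ to be even and every coloring $f$ to satisfy $2(f(x_i)-f(x_j))\equiv 0 \pmod n$ for all $i,j$; this follows from relation~(\ref{eq:4}) specialized to $q\equiv 7\pmod{14}$, which is the residue class supplying $2^{p-1}n$ colorings. Consequently, for each coloring I write $f(x_i)=f(x_1)+m_{i-1}$ with $m_{i-1}\in\{0,n/2\}$ for $1<i\le 7$, so that the $2^{6}=64$ signatures $(m_1,\dots,m_6)$ together with the choice of $f(x_1)$ parametrize all colorings. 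The all-zero signature yields the $n$ trivial colorings, and each of the remaining $63$ signatures contributes $n$ further colorings, accounting for all $64n$ vertices.

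Next I would count edges. By Theorem~\ref{Thm3.5}, the $n$ trivial colorings already form $(\overleftrightarrow{K_n},\hat n)$, and since any endomorphism $\phi(y)=ay+b$ of $R_n$ sends constants to constants, there are no edges from trivial to nontrivial vertices. For arbitrary colorings $f,g$, the equation $\phi(f)=g$ determines $b=g(x_1)-af(x_1)$ and then reduces to the system $a\,m_{i-1}=m'_{i-1}$ for $1<i\le 7$, where $(m'_j)$ is the signature of $g$.

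A short case analysis parallel to Theorem~\ref{thm3.3} then finishes the argument. Any index with $m_{i-1}=0$ forces $m'_{i-1}=0$, while at an index with $m_{i-1}=n/2$ the value $a\cdot(n/2)$ equals $0$ or $n/2$ according as $a$ is even or odd. Hence, for any nontrivial $f$, the signature of $g$ is forced to be either identically $0$ (so $g$ is trivial, realized by the $n/2$ even choices of $a$) or exactly the signature of $f$ (realized by the $n/2$ odd choices of $a$). This yields weight $n/2$ from each nontrivial coloring to each trivial coloring, weight $n/2$ between any two colorings sharing a common nontrivial signature, and no edges between colorings of distinct nontrivial signatures.

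Assembling these observations, each of the $63$ nontrivial signatures produces a disjoint copy of $(\overleftrightarrow{K_n},\hat{n/2})$, all of which are joined to the trivial subgraph $(\overleftrightarrow{K_n},\hat n)$ by weight-$n/2$ edges, giving precisely $(\overleftrightarrow{K_n},\hat n)\overleftarrow{\nabla}_{\hat{n/2}}\bigl[\bigsqcup_{63}(\overleftrightarrow{K_n},\hat{n/2})\bigr]$. The only delicate point, and thus the main thing to verify carefully, is the disjointness of the $63$ blocks: this is immediate from the fact that the parity of $a$ is pinned down the instant one examines any single index $j$ with $m_j=n/2$, so the signature of $g$ at every other such index is then forced to match $f$.
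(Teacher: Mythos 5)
Your proposal is correct and follows essentially the same route as the paper's own proof: parametrize colorings by the signature $(m_1,\dots,m_6)$ with entries in $\{0,n/2\}$, reduce $\phi\circ f=g$ to $a\,m_{i-1}=m'_{i-1}$, and split on the parity of $a$ to get weight $n/2$ to the trivial block and weight $n/2$ within each of the $63$ signature classes. Your closing remark on why the $63$ blocks are disjoint is a slightly more explicit justification of a point the paper states tersely, but it is the same argument.
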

\begin{proof}
    In order to have $N=64n$ colorings, we know $n$ must be even and for any coloring $f$, $2(f(x_i)-f(x_j))=0$ for any $i,j$. The $64n$ colorings consist of $n$ trivial colorings and $63n$ nontrivial colorings. By Theorem \ref{Thm3.5}, the $n$ trivial colorings correspond to a subgraph $(\overleftrightarrow{K_n}, \hat n)$ of $\mathcal{Q}_{R_n} (T(7,q))$. Now, there are $63n$ nontrivial colorings, so let $f$ be a nontrivial coloring, thus we have, $f(x_i)=f(x_1)+m_{i-1}$ for $1<i\leq 7$ where $m_{i-1}=0$ or $\frac{n}{2}$. Since $n$ is even and $f$ nontrivial, there exists $i$ such that $m_{i-1}=\frac{n}{2}$. We know for some coloring $g\in Hom(Q(T(7,q)),R_n)$, $g(x_i)=g(x_1)+m'_{i-1}$ where $m'_{i-1}=0$ or $\frac{n}{2}$ . By satisfying equation~(\ref{eq:2}), we have $am_{i-1}=m'_{i-1}$ for $1<i\leq 7$. If any $m_{i-1}=0$, then $m'_{i-1}=0$. Thus, consider the $m_{i-1}$ such that $m_{i-1}=\frac{n}{2}$ and so we have $a\frac{n}{2}=0$ or $n/2$. If $a\frac{n}{2}=0$ then $a$ is even, i.e. $g$ must be trivial, and we have $n/2$ edges between nontrivial colorings to trivial colorings. If $a\frac{n}{2}=\frac{n}{2}$ then $a$ is odd ($m_{i-1}=m'_{i-1}=\frac{n}{2}$ and we have $n/2$ edges between nontrivial colorings to nontrivial colorings. For each choice of $m_1,m_2,...,m_6$, there are $n$ colorings  which divides the nontrivial colorings into 63 disjoint $\left(\overleftrightarrow{K_{n}}, \hat {\frac{n}{2}}\right)$.    
\end{proof}

\begin{thm}\label{Thm3.8}
   If $|Hom(T(7,q), R_n)|=n^7$, then the full quandle coloring quiver is the  directed graph: \[
    \mathcal{Q}_{R_n} (T(7,q))=(\overleftrightarrow{K_n}, \hat n)\overleftarrow{\nabla} \big[\bigsqcup\limits_{m}\left(\overleftrightarrow{K_{n(n-1)}}, \hat {1}\right)\big],
    \] where $m=\frac{n^7-n}{n(n-1)}$.
\end{thm}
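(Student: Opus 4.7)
The plan is to mirror the structure of the proof of Theorem~\ref{Thm5.4} but with seven free parameters instead of five, since the hypothesis $|Hom(Q(T(7,q)),R_n)|=n^7$ forces $y_1,\ldots,y_7$ to be free in $R_n$ (and implicitly $n$ prime, so that the Euler function argument goes through). First I would partition the $n^7$ vertices into the $n$ trivial colorings and the $n^7-n$ nontrivial ones. The trivial part, by the same reasoning as in Theorem~\ref{Thm3.5}, contributes the complete subgraph $(\overleftrightarrow{K_n},\hat n)$ and receives an edge from every nontrivial vertex (once we verify the endomorphism equation is unconditionally solvable when the target differences vanish).

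Next I would parametrize each nontrivial coloring $f$ by the tuple $(f(x_1); m_1,\ldots,m_6)$ with $m_{i-1}=f(x_i)-f(x_1)$, and similarly $g$ by $(g(x_1); m'_1,\ldots,m'_6)$. Applying $\phi(f)=af+b$ and matching componentwise yields the system $a\,m_{i-1}=m'_{i-1}$ for $1<i\leq 7$. From here the argument splits according to which $m_j$ vanish: positions with $m_{i-1}=0$ force $m'_{i-1}=0$, while positions with $m_{i-1}\neq 0$ demand a single common value of $a$. Since $n$ is prime, each nonzero $m_{i-1}$ is invertible via $m_{i-1}^{\rho(n)-1}$, so an edge exists iff the compatibility conditions
\begin{equation}\label{eq:3.7}
 m'_{j_1}m_{j_1}^{\rho(n)-1}=m'_{j_2}m_{j_2}^{\rho(n)-1}=\cdots
\end{equation}
hold across all indices $j$ with $m_j\neq 0$.

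Then I would run the same three-case analysis as in Theorem~\ref{Thm5.4}. Case 1 ($m_j=m'_j$ for all $j$) trivially satisfies (\ref{eq:3.7}) and groups colorings into $\tfrac{n^7-n}{n}$ complete subgraphs of the form $(\overleftrightarrow{K_n},\hat 1)$, indexed by fixing the nonzero six-tuple $(m_1,\ldots,m_6)$ while letting $f(x_1)$ range. Case 2 (some but not all $m_j$ preserved) violates (\ref{eq:3.7}) and so contributes no edges between the corresponding subgraphs. Case 3 ($m_j\neq m'_j$ for every $j$ with $m_j\neq 0$) allows exactly $n-2$ choices of $m'_j$ at a pivot index, each uniquely determining the remaining $m'_k$ via (\ref{eq:3.7}); hence each $(\overleftrightarrow{K_n},\hat 1)$ block attaches to $n-2$ other such blocks, amalgamating groups of $n-1$ blocks into a single $(\overleftrightarrow{K_{n(n-1)}},\hat 1)$ and leaving distinct amalgams mutually disconnected by Case 2. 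Counting gives $\tfrac{n^7-n}{n(n-1)}$ such components.

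The main obstacle I anticipate is the bookkeeping in Case 3: one must verify that ``$n-2$ choices'' at a single pivot really determines the other coordinates consistently, and that the resulting equivalence classes of blocks have size exactly $n-1$ (so that $\tfrac{n^7-n}{n}$ blocks aggregate cleanly into $\tfrac{n^7-n}{n(n-1)}$ copies of $(\overleftrightarrow{K_{n(n-1)}},\hat 1)$). This is really an orbit count for the multiplicative action of $R_n^\times$ on the six-tuples $(m_1,\ldots,m_6)$ supported on a fixed subset of coordinates, and the proof should isolate this as a short lemma before invoking it to finish.
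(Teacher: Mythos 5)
Your proposal is correct and follows essentially the same route as the paper's own proof: the same parametrization of nontrivial colorings by the differences $m_{i-1}=f(x_i)-f(x_1)$, the same reduction to $am_{i-1}=m'_{i-1}$, and the same three-case analysis yielding $\frac{n^7-n}{n}$ blocks $(\overleftrightarrow{K_n},\hat 1)$ that amalgamate in groups of $n-1$ into $\frac{n^7-n}{n(n-1)}$ disjoint copies of $(\overleftrightarrow{K_{n(n-1)}},\hat 1)$. Your observation that the primality of $n$ is needed (it is implicit in the theorem's statement but used in the Euler-function step) and your suggestion to isolate the orbit count as a separate lemma are sensible refinements, but they do not constitute a different argument.
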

\begin{proof}
We know the quiver will have a subgraph $(\overleftrightarrow{K_n}, \hat n)$ corresponding to the $n$ trivial colorings. Let $f$ be a nontrivial coloring, thus we have $$f(x_i)=f(x_1)+m_{i-1}$$ 
where $1< i\leq 7$ and $0\leq m_{i-1}\leq n-1$. 
Since $f$ is nontrivial, there exists an index $j$ such that $m_j\neq 0$. Let $g$ be another coloring, thus we have $$g(x_i)=g(x_1)+m'_{i-1}$$
where $1<i\leq 7$ and $0\leq m'_{i-1}\leq n-1$. 
To satisfy equation~(\ref{eq:2}), we get for $1<i\leq 7$, $$am_{i-1}=m'_{i-1}$$
If any $m_{i-1}=0$, then $m'_{i-1}=0$. Thus, we need only consider the $i-1$ such that $m_{i-1}\neq 0$. Let $\rho(n)$ be the Euler function of $n$, which is prime by hypothesis, and so $m_{i-1}^{\rho(n)}=1$. Therefore, we have a unique solution (an edge) for $a$ if the following equation is satisfied for all $i-1$ such that $m_{i-1}\neq 0$:\begin{equation}\label{eq:5}
     m'_1m^{\rho (n)-1}_1=m'_2m^{\rho (n)-1}_2=m'_3m^{\rho (n)-1}_3=...=m'_{6}m^{\rho (n)-1}_{6}\
\end{equation}
Fix $m_i$ for each $i$ such that $m_i\neq 0$, so $1\leq m_i\leq n-1$. Now, we consider the following $3$ cases 
    \begin{enumerate}
        \item \textbf {Case 1: if $m_i=m'_i$ for all $i$:}\\
        It is clear that equation~(\ref{eq:5}) is satisfied in this case. Therefore there is an edge between the $n$ vertices for each choice of $m_1,...,m_6$. This divides the nontrivial colorings into $\frac{n^7-n}{n}$ $(\overleftrightarrow{K_n}, \hat 1)$ subgraphs.
        
        \item \textbf {Case 2: if $m_i=m'_i$ for some $i$ and $m_j\neq m'_j$ for some $j$:}\\
        In this case, it is impossible to satisfy equation~(\ref{eq:5}), since $m'_jm_j^{\rho(n)-1}\neq 1=m'_im_i^{\rho(n)-1}$. Therefore there are no edges between colorings in this case.

        \item \textbf {Case 3: if $m_i\neq m'_i$ for all $i$:}\\
        We have $n-2$ options for $m'_i$, which determines the other $m'_j$'s to satisfy equation~(\ref{eq:5}). Therefore there is an edge between each vertex of the $\frac{n^7-n}{n}$ $(\overleftrightarrow{K_n}, \hat 1)$ subgraphs from Case 1, to each vertex of $n-2$ other $\frac{n^7-n}{n}$ $(\overleftrightarrow{K_n}, \hat 1)$ subgraphs. This means we now have subgraphs $(\overleftrightarrow{K_{n(n-1)}}, \hat 1)$. We thus have $\frac{n^7-n}{n(n-1)}$ $(\overleftrightarrow{K_{n(n-1)}}, \hat 1)$ subgraphs. 

        Case 2 tells us that the $(\overleftrightarrow{K_{n(n-1)}}, \hat 1)$ subgraphs are disjoint. However, we have an edge from each nontrivial coloring to each trivial coloring since equation~(\ref{eq:5}) is always satisfied if $m'_i=0$ for all $i$. This gives the desired result.
        
    \end{enumerate}
\end{proof}

\section{Quandle Coloring Quivers of General $(p,q)$-Torus links with Dihedral Quandles}\label{Gen}

Using the braid form $(\sigma_1\sigma_2...\sigma_{p-1})^q$ of $T(p,q)$, we calculate the number of colorings, $N$, for varying values of $q$ when $p$ is an odd prime. Let $f$ be a coloring of $T(p,q)$ by $R_n$ and $f(x_i)=y_i$ for $i=1,...,p$. Coloring the braid $(\sigma_1 \sigma_2 ...  \sigma_{p-1})^q$ by the dihedral quandle and using the fact that $f(x_i)=y_i$ gives the following equation:
\begin{equation}\label{eq:6}
    y_i=y_p+(-1)^{q+1}y_q+(-1)^qy_{i+q}
\end{equation}
Thus, we have the following:
\begin{itemize}
    \item When $q=2pk$, $N=n^p$.
    \item When $q=2pk+i$ where $i<2p$ is an odd number but $i\neq p$, $N=n$.
    \item When $q=2pk+j$ where $j<2p$ is an even number, \begin{itemize}
        \item When $n=p$ or $\gcd(n,p)=1$, $N=n$;
        \item When $n\geq 2p$ and $\gcd(n,p)=p$, $N=pn$.
    \end{itemize}
    \item When $q=2pk+p$, 
    \begin{itemize}
    \item When $\gcd(2,n)=1$, $N=n$.
    \item When $\gcd(2,n)=2$, $N=2^{p-1}n$.
    \end{itemize}
\end{itemize}

\begin{thm}\label{Thm4.1}
        If $|Hom(T(p,q), R_n)|=n$, then the full quandle coloring quiver is the complete directed graph: \[
    \mathcal{Q}_{R_n} (T(p,q))=(\overleftrightarrow{K_n}, \hat n).
    \]
\end{thm}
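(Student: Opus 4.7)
The plan is to mimic the arguments already carried out in Theorems \ref{Thm3.1} and \ref{Thm3.5} for the specific cases $p=5$ and $p=7$, observing that the argument there never used the specific value of $p$. The hypothesis $|\mathrm{Hom}(Q(T(p,q)), R_n)| = n$ is the critical ingredient: since the $n$ constant maps $f_j \colon x_i \mapsto j$ (for $j \in R_n$) are always quandle homomorphisms from $Q(T(p,q))$ to $R_n$, they already account for $n$ colorings, and therefore by the cardinality assumption \emph{all} colorings in this case must be trivial.

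Next I would unpack the definition of the quandle coloring quiver: the vertices are the colorings, and for two colorings $f, g$ the number of directed edges from $f$ to $g$ is the number of quandle endomorphisms $\phi \colon R_n \to R_n$ with $\phi \circ f = g$. Using the well-known description of endomorphisms of the dihedral quandle (from \cite{EMR}, and recalled as equation~(\ref{eq:2}) in the paper), every such $\phi$ has the form $\phi(x) = ax + b$ with $a, b \in R_n$.

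The counting step is then immediate. Writing $f(x_i) = j$ and $g(x_i) = k$ for the (constant) values of the two trivial colorings, the condition $\phi \circ f = g$ reduces to the single linear equation $aj + b = k$ in $R_n$. For each of the $n$ choices of $a \in R_n$ there is a unique $b = k - aj$ that solves this equation, so there are exactly $n$ endomorphisms $\phi$ with $\phi \circ f = g$. Hence every ordered pair of vertices is joined by precisely $n$ directed edges, which is the definition of $(\overleftrightarrow{K_n}, \hat{n})$.

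There is no real obstacle here; the proof is a direct transcription of the $p=5$ and $p=7$ arguments, since the only property of $p$ used is that the hypothesis forces every coloring to be trivial, and this is guaranteed by the assumption $|\mathrm{Hom}(Q(T(p,q)), R_n)| = n$ together with the existence of the $n$ trivial colorings. The proof should therefore be short, consisting of (a) identifying the vertex set with the $n$ trivial colorings, and (b) solving $aj + b = k$ to count edges.
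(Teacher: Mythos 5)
Your proposal is correct and follows essentially the same route as the paper's own proof: identify all $n$ colorings as the trivial ones, use the form $\phi(x)=ax+b$ of endomorphisms of $R_n$ from \cite{EMR}, and count the $n$ solutions of $aj+b=k$. In fact your write-up is slightly more careful than the paper's, since you explicitly justify (via the cardinality hypothesis and the existence of the $n$ constant homomorphisms) why every coloring must be trivial, and you exhibit the unique $b=k-aj$ for each choice of $a$.
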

\begin{proof}
    We know if $|Hom(T(p,q), R_n)|=n$, this corresponds to the $n$ trivial colorings. Now assume that $f$ and $g$ are both trivial colorings given respectively by $f(x_i)=j\; \textit{and}\; g(x_i)=k$ for all $x_i \in Q(T(p,q))$.  Since any endomorphism of the dihedral quandle $R_n$ is given by $\phi(f)=af+b$ (see \cite{EMR}), we need to find the number of endomorphisms $\phi$ making the equation 
    \begin{eqnarray}\label{eq:7}
        \phi(f)=af+b=g
    \end{eqnarray} hold, where $a,b \in R_n$. there are exactly $n$ solutions of functions $\phi$ making the equation $\phi(f)=g$ hold.  Thus we obtain the complete graph $ \mathcal{Q}_{R_n} (T(p,q))=(\overleftrightarrow{K_n}, \hat n)$.
\end{proof}

\begin{thm}\label{thm4.2}
   If $|Hom(T(p,q), R_n)|=pn$, then the full quandle coloring quiver is the directed graph: \[
    \mathcal{Q}_{R_n} (T(p,q))=(\overleftrightarrow{K_n}, \hat n)\overleftarrow{\nabla}_{\hat{d}}\left(\overleftrightarrow{K_{(p-1)n}}, \hat {d}\right)
    ,\]where $d=\frac{n}{p}$. 
\end{thm}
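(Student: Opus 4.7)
The plan is to mirror the three-case edge analysis of Theorems \ref{thm3.2} and \ref{Thm3.6}; since those proofs already encode the general pattern, my task is essentially to verify that the same counts survive for an arbitrary odd prime $p$. First I would extract the structural consequence of the hypothesis. From the summary of $N$ preceding the theorem, the equality $|\mathrm{Hom}(T(p,q), R_n)| = pn$ forces $p \mid n$ (and $q$ even with $q \not\equiv 0 \pmod{2p}$), so equation (\ref{eq:6}) yields $p(y_i - y_j) \equiv 0 \pmod n$ for every coloring $f(x_i) = y_i$. Consequently each pairwise difference $y_i - y_j$ is a multiple of $n/p$. The $n$ colorings for which every difference vanishes are the trivial ones, and by Theorem \ref{Thm4.1} they form the subgraph $(\overleftrightarrow{K_n}, \hat n)$; the remaining $(p-1)n$ colorings are nontrivial and will constitute the second complete subgraph.

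Next I would carry out the three-case edge count, using that every endomorphism of $R_n$ has the form $\phi(x) = ax + b$ with $a, b \in R_n$, so an edge from $f$ to $g$ is a pair $(a,b)$ solving equation (\ref{eq:7}). \textbf{Case 1:} If $f, g$ are both trivial, equation (\ref{eq:7}) has $n$ solutions, matching the weight $\hat n$ on the trivial subgraph. \textbf{Case 2:} If $f$ is nontrivial and $g \equiv k$ trivial, equation (\ref{eq:7}) reduces to $a(f(x_i) - f(x_j)) \equiv 0 \pmod n$ for all $i, j$, with $b = k - a f(x_1)$. Some difference equals $m(n/p)$ with $m \not\equiv 0 \pmod p$ (else the coloring would be trivial), so primality of $p$ forces $a \equiv 0 \pmod p$: this yields $n/p$ choices for $a$ and a unique $b$, giving $n/p$ edges from each nontrivial to each trivial vertex. \textbf{Case 3:} If $f, g$ are both nontrivial, write $f(x_i) - f(x_j) = \alpha_{ij}(n/p)$ and $g(x_i) - g(x_j) = \beta_{ij}(n/p)$ with $\alpha_{ij}, \beta_{ij} \in \Z/p$. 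Equation (\ref{eq:7}) becomes $a\alpha_{ij} \equiv \beta_{ij} \pmod p$ for all $(i,j)$; picking any $(i,j)$ with $\alpha_{ij} \not\equiv 0$ determines $a$ modulo $p$ uniquely, again giving $n/p$ choices of $a$ and a unique $b$. Finally, no endomorphism sends a constant map to a non-constant one, so there are no edges from trivial to nontrivial vertices.

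The step I expect to be the main obstacle is the consistency check in Case 3: the system $a\alpha_{ij} \equiv \beta_{ij} \pmod p$ must admit a common solution for \emph{every} ordered pair of nontrivial colorings, so that the resulting subgraph is a single complete $(\overleftrightarrow{K_{(p-1)n}}, \hat{n/p})$ rather than a disjoint union (as happens in Theorem \ref{Thm3.7}). I would handle this by observing, as verified explicitly in the $p = 5$ and $p = 7$ cases, that the braid relations pin down every nontrivial coloring to the one-parameter form $y_i = y_p + c_i d$, with a fixed shape vector $(c_1, \ldots, c_p) \in \Z^p$ depending only on $q \pmod{2p}$ and $d$ a nonzero multiple of $n/p$. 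The ratios $\beta_{ij}/\alpha_{ij} \pmod p$ are then all equal to $d_g/d_f \pmod p$, a single element of $(\Z/p)^\times$; solvability is automatic, the count $n/p$ is uniform across all such pairs, and the gluing $\overleftarrow{\nabla}_{\hat{n/p}}$ asserted by the statement follows.
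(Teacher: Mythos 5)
Your proposal is correct and follows essentially the same route as the paper's proof: the same three-case count of solutions $(a,b)$ to $\phi(f)=af+b=g$, yielding weight $n$ on the trivial subgraph and $n/p$ edges in the other two cases. In fact you are more careful than the paper in Case 3, where you explicitly verify (via the one-parameter form of the nontrivial colorings forced by equation~(\ref{eq:6})) that the conditions $a\alpha_{ij}\equiv\beta_{ij}\pmod p$ are mutually consistent for every ordered pair, so that the nontrivial colorings form a single complete subgraph $\left(\overleftrightarrow{K_{(p-1)n}},\hat{\tfrac{n}{p}}\right)$ rather than a disjoint union.
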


\begin{proof}
     Let $f,g \in Hom (Q(T(p,q)), R_n)$ be two vertices of the quiver.  Since any endomorphism of the dihedral quandle $R_n$ is given by $\phi(f)=af+b$, we need to find the number of endomorphisms $\phi$ making the equation~(\ref{eq:7}) hold. We know when there are $N=pn$ colorings, then $gcd(p,n)=p$. Now, since there are no edges from a trivial coloring to a non-trivial coloring,  we consider the following $3$ cases:
    \begin{enumerate}
        \item \textbf {Case 1.}
        It is clear that if $f$ and $g$ are trivial colorings, i.e. constant maps, then there are $n$ possible solutions to equation~(\ref{eq:7}), and we have $n$ edges between trivial colorings.
        
        \item \textbf {Case 2.}
        When $f$ is a non-trivial coloring, then for some trivial coloring $g$ we have $g(x_i)=k$ for all $x_i$, and equation~(\ref{eq:7}) becomes $a(f(x_i)-f(x_j))=0 \mod n$ for all $i,j$. Since $p(f(x_i)-f(x_j))=0\mod n$, we have $\frac{n}{p}$ edges from nontrivial colorings to trivial colorings.

        \item \textbf {Case 3.}
        When $f$ and $g$ are both non-trivial colorings, then $f$ and $g$ are given by $p(f(x_i)-f(x_j))=p(g(x_i)-g(x_j))=0$ modulo $n$. Since the equation
$ (g(x_i)-g(x_j))-a(f(x_i)-f(x_j))=0$ in $R_n$ has  $\frac{n}{p}$ solutions for $a$, we get the result.
        \

    \end{enumerate}
\end{proof}

\begin{thm}\label{thm4.3}
    If $n$ is even and $|Hom(T(p,q), R_n)|=2^{p-1}n$, then the full quandle coloring quiver is the  directed graph: \[
    \mathcal{Q}_{R_n} (T(p,q))= (\overleftrightarrow{K_n}, \hat n)\overleftarrow{\nabla}_{\hat{d}}\big[\bigsqcup\limits _{2^{p-1}-1}\left(\overleftrightarrow{K_{n}}, \hat {d}\right)\big]
    ,\]where $d=\frac{n}{2}$.
     
\end{thm}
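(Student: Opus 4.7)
The plan is to mirror the strategy used in Theorem~\ref{thm3.3} and Theorem~\ref{Thm3.7}, adapted to arbitrary odd prime $p$. The starting observation is that in order for $|Hom(T(p,q), R_n)| = 2^{p-1}n$ to occur, equation~(\ref{eq:6}) forces $2(f(x_i) - f(x_j)) = 0 \pmod n$ for every pair of indices $i,j$ and every coloring $f$. In particular, $n$ must be even, and for any coloring $f$ one can write
\[
f(x_i) = f(x_1) + m_{i-1}, \quad 1 < i \le p,
\]
with $m_{i-1} \in \{0, n/2\}$. The $2^{p-1}$ choices of the tuple $(m_1,\ldots,m_{p-1})$ together with the $n$ choices of $f(x_1)$ account for exactly the $2^{p-1}n$ colorings; the unique tuple $(0,\ldots,0)$ yields the $n$ trivial colorings, and the remaining $2^{p-1}-1$ tuples each yield $n$ nontrivial colorings.

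Next I would invoke Theorem~\ref{Thm4.1} to conclude that the $n$ trivial colorings form the subgraph $(\overleftrightarrow{K_n},\hat n)$. To analyze edges, I would use the form $\phi(f) = af + b$ of an endomorphism of $R_n$ and translate $\phi(f) = g$ (with $g(x_i) = g(x_1) + m'_{i-1}$) into the system $a m_{i-1} = m'_{i-1}$ for $1<i\leq p$. For any index with $m_{i-1}=0$ this forces $m'_{i-1}=0$; for any index with $m_{i-1}=n/2$, the product $a\cdot n/2$ is either $0$ (when $a$ is even) or $n/2$ (when $a$ is odd). This dichotomy immediately splits the analysis into the cases needed.

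From here the edge counts fall out by counting the values of $a$ (and the free parameter $b$): if $f$ is nontrivial and $g$ is trivial, then $a$ must be even, producing exactly $n/2$ choices for $a$ and hence $n/2$ edges from each nontrivial coloring to each trivial coloring; if both $f$ and $g$ are nontrivial, then $a$ must be odd and each nonzero coordinate must match ($m'_{i-1} = m_{i-1} = n/2$), which forces the tuples $(m_1,\ldots,m_{p-1})$ and $(m'_1,\ldots,m'_{p-1})$ to coincide. Thus the $2^{p-1}-1$ tuples index $2^{p-1}-1$ mutually disconnected equivalence classes, each carrying a complete graph $(\overleftrightarrow{K_n},\widehat{n/2})$ on $n$ vertices with $n/2$ edges between any ordered pair. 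Assembling these pieces gives precisely the claimed decomposition.

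The main obstacle, as in the $p=5$ and $p=7$ theorems, is not a hard calculation but rather a careful bookkeeping argument: one must verify that when the tuples $(m_1,\ldots,m_{p-1})$ and $(m'_1,\ldots,m'_{p-1})$ differ, no odd scalar $a$ can match them, ensuring the $2^{p-1}-1$ nontrivial blocks are genuinely disjoint rather than merely merging into one large component. Since the argument depends only on the two-element set $\{0, n/2\}$ of possible entries, the transition from $p=5,7$ to general odd prime $p$ is purely combinatorial and introduces no new difficulty.
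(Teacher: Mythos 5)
Your proposal is correct and follows essentially the same route as the paper's proof: the same parametrization of colorings by tuples $(m_1,\ldots,m_{p-1})$ with entries in $\{0,n/2\}$, the same reduction of $\phi(f)=af+b=g$ to the system $am_{i-1}=m'_{i-1}$, and the same parity dichotomy on $a$ yielding $n/2$ edges in each case. If anything, you are slightly more explicit than the paper in verifying that distinct nontrivial tuples admit no connecting endomorphism, which is a welcome clarification rather than a departure.
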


\begin{proof}
 When determining the number of colorings, $N$, we get the equation $2(f(x_i)-f(x_j))=0$ for any coloring $f$, and any $i,j.$ Since $f(x_i)$ takes $n$ possible values, and for $j \neq i$, $f(x_j)$ takes 2 values, thus giving  $N=n \cdot 2^{p-1}$.  This includes $n$ trivial colorings and $(2^{p-1}-1)n$ nontrivial colorings. By Theorem \ref{Thm4.1}, 
 the $n$ trivial colorings correspond to a subgraph $(\overleftrightarrow{K_n}, \hat n)$ of $\mathcal{Q}_{R_n} (T(p,q))$. Now, there are $(2^{p-1}-1)n$ nontrivial colorings, so let $f$ be a nontrivial coloring, thus we have, $f(x_i)=f(x_1)+m_{i-1}$ for $1<i\leq p$ where $m_{i-1}=0$ or $\frac{n}{2}$. Since $n$ is even and $f$ nontrivial, there exists $i$ such that $m_{i-1}=\frac{n}{2}$. We know for some coloring $g$ of $T(p,q)$ by $R_n$, we have $g(x_i)=g(x_1)+m'_{i-1}$ where $m'_{i-1}=0$ or $\frac{n}{2}$ . By satisfying equation~(\ref{eq:7}), we have $am_{i-1}=m'_{i-1}$ for $1<i\leq p$. If any $m_{i-1}=0$, then $m'_{i-1}=0$. Thus, consider the $m_{i-1}$ such that $m_{i-1}=\frac{n}{2}$ and so we have $a\frac{n}{2}=0$ or $n/2$. If $a\frac{n}{2}=0$ then $a$ is even, i.e. $g$ must be trivial, and we have $n/2$ edges between nontrivial colorings to trivial colorings. If $a\frac{n}{2}=\frac{n}{2}$ then $a$ is odd ($m_{i-1}=m'_{i-1}=\frac{n}{2}$ and we have $n/2$ edges between nontrivial colorings to nontrivial colorings. For each choice of $m_1,m_2,...,m_{p-1}$, there are $n$ colorings  which divides the nontrivial colorings into $2^{p-1}-1$ disjoint $\left(\overleftrightarrow{K_{n}}, \hat {\frac{n}{2}}\right)$.  
\end{proof}

\begin{thm}\label{Thm4.4}
   If $n$ is prime and $|Hom(T(p,q), R_n)|=n^p$, then the full quandle coloring quiver is the  directed graph: \[
    \mathcal{Q}_{R_n} (T(p,q))=(\overleftrightarrow{K_n}, \hat n)\overleftarrow{\nabla} \big[\bigsqcup\limits_{m}\left(\overleftrightarrow{K_{n(n-1)}}, \hat {1}\right)\big],
    \] where $m=\frac{n^p-n}{n(n-1)}$. 
     
\end{thm}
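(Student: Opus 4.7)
The plan is to mirror the arguments of Theorems~\ref{Thm5.4} and~\ref{Thm3.8} essentially verbatim, substituting the general prime $p$ for $5$ or $7$; the hypothesis ``$n$ prime'' is used at precisely the same step and in the same way. Since $|\textup{Hom}(Q(T(p,q)), R_n)| = n^p$ forces all the $y_i$ in equation~(\ref{eq:6}) to be free, the $n$ constant maps contribute the subgraph $(\overleftrightarrow{K_n}, \hat n)$ by Theorem~\ref{Thm4.1}, leaving $n^p - n$ nontrivial colorings whose mutual edges and edges to the trivial block are what remains to determine.

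For the nontrivial colorings, I would parametrize $f \in \textup{Hom}(Q(T(p,q)), R_n)$ by its displacement vector $f(x_i) = f(x_1) + m_{i-1}$ for $2 \le i \le p$, with $(m_1,\dots,m_{p-1})$ not identically zero, and similarly write $g(x_i) = g(x_1) + m'_{i-1}$. Plugging into the edge condition $\phi(f) = g$ with $\phi(x) = ax + b$ reduces to the system $a\, m_{i-1} = m'_{i-1}$ for $1 < i \le p$, which forces $m'_{i-1} = 0$ whenever $m_{i-1} = 0$ and, using primality of $n$ to invert each nonzero $m_{i-1}$ via Fermat ($m_{i-1}^{\rho(n)} = 1$), collapses to the single compatibility equation
\[
m'_1 m_1^{\rho(n)-1} = m'_2 m_2^{\rho(n)-1} = \cdots = m'_{p-1} m_{p-1}^{\rho(n)-1}
\]
over the indices where $m_{i-1} \neq 0$.

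Next I would perform the same three-case split as in Theorem~\ref{Thm5.4}, comparing a fixed displacement $(m_1,\dots,m_{p-1})$ to an arbitrary $(m'_1,\dots,m'_{p-1})$. In Case~1, the two coincide and $a=1$ gives the unique edge; the $n$ colorings sharing the displacement form $(\overleftrightarrow{K_n}, \hat 1)$. In Case~2, they agree on a proper nonempty set of nonzero indices, the compatibility equation fails, and no edge exists. In Case~3, they disagree on every nonzero index, giving exactly $n-2$ valid nontrivial scalars $a$ and hence $n-2$ further displacement vectors that each produce an edge, so the original $K_n$ fuses with $n-2$ sibling copies into a single $(\overleftrightarrow{K_{n(n-1)}}, \hat 1)$. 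Counting displacements modulo nonzero scaling gives $\frac{n^p - n}{n(n-1)}$ disjoint components, and the observation that $m'_i = 0$ for all $i$ satisfies the compatibility equation unconditionally supplies the $\overleftarrow{\nabla}$ edges back to the trivial subgraph.

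I do not foresee a serious obstacle, since the structure of the argument is already spelled out for $p = 5$ and $p = 7$ and the size of $p$ enters only through the length of the displacement vector. Primality of $n$ is invoked solely so that $R_n$ is a field and $\mathbb{Z}_n^\ast$ is a group; nothing in the case split or in Fermat's identity degrades as $p$ grows. The one step worth handling carefully is the bookkeeping in Case~3: I must confirm that the $n-2$ new displacement vectors are genuinely distinct from the original and from the zero displacement, so that $(\overleftrightarrow{K_{n(n-1)}}, \hat 1)$ is built from exactly $n - 1$ disjoint $K_n$ pieces and the denominator $n(n-1)$ in $m = \frac{n^p-n}{n(n-1)}$ is not double-counted. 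This is routine but is the only place where the generalization from small $p$ to arbitrary $p$ could silently miscount.
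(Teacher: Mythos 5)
Your proposal is correct and follows essentially the same route as the paper's own proof: the same displacement-vector parametrization, the same reduction to $am_{i-1}=m'_{i-1}$ and the compatibility chain via $m_{i-1}^{\rho(n)-1}$, the same three-case split, and the same count of $\frac{n^p-n}{n(n-1)}$ disjoint $(\overleftrightarrow{K_{n(n-1)}},\hat{1})$ blocks with $\overleftarrow{\nabla}$ edges to the trivial block. The one point you flag for care --- that the $n-2$ sibling displacement vectors in Case 3 are distinct from the original and from zero, since $(a-1)m_i\neq 0$ for $a\neq 1$ and $n$ prime --- is exactly the implicit step in the paper's argument and is handled correctly.
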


\begin{proof}
We know the quiver will have a subgraph $(\overleftrightarrow{K_n}, \hat n)$ corresponding to the $n$ trivial colorings. Let $f$ be a nontrivial coloring, so 
    $$f(x_i)=f(x_1)+m_{i-1}$$
for some $0\leq m_1,m_2,...,m_{p-1}\leq n-1$. Now, let $g\neq f$ be another coloring, so 
    $$g(x_i)=g(x_1)+m'_{i-1}$$
for some $1\leq m'_1,m'_2,...,m'_{p-1}\leq n$.
To satisfy equation~(\ref{eq:7}) , we get 
    $$a m_{i-1}=m'_{i-1}.$$
If any $m_i=0$, then $m'_i=0$. Thus, we need only consider the $i$ such that $m_i\neq 0$. Let $\rho(n)$ be the Euler function of $n$, which is prime by hypothesis, and so $m_{i-1}^{\rho(n)}=1$. Therefore, we have a unique solution (an edge) if the following equation is satisfied for all $i$ such that $m_i\neq 0$:\begin{equation}\label{eq:6}
     m'_1m^{\rho (n)-1}_1=m'_2m^{\rho (n)-1}_2=m'_3m^{\rho (n)-1}_3=...=m'_{p-1}m^{\rho (n)-1}_{p-1}\
\end{equation}
Fix $m_i$ for each $i$ such that $m_i\neq 0$, so $1\leq m_i\leq n-1$. Now, we consider the following $3$ cases 
    \begin{enumerate}
        \item \textbf {Case 1: if $m_i=m'_i$ for all $i$:}\\
        It is clear that equation~(\ref{eq:6}) is satisfied in this case. Therefore there is an edge between the $n$ vertices for each choice of $m_1,...,m_{p-1}$. This divides the nontrivial colorings into $\frac{n^p-n}{n}$ $(\overleftrightarrow{K_n}, \hat 1)$ subgraphs.
        
        \item \textbf {Case 2: if $m_i=m'_i$ for some $i$ and $m_j\neq m'_j$ for some $j$:}\\
        In this case, it is impossible to satisfy equation~(\ref{eq:6}), since $m'_jm_j^{\rho(n)-1}\neq 1=m'_im_i^{\rho(n)-1}$. Therefore there are no edges between colorings in this case.

        \item \textbf {Case 3: if $m_i\neq m'_i$ for all $i$:}\\
        We have $n-2$ options for $m'_i$, which determines the other $m'_j$'s to satisfy equation~(\ref{eq:6}). Therefore there is an edge between each vertice of the $\frac{n^p-n}{n}$ $(\overleftrightarrow{K_n}, \hat 1)$ subgraphs from Case 1, to each vertice of $n-2$ other $\frac{n^p-n}{n}$ $(\overleftrightarrow{K_n}, \hat 1)$ subgraphs. This means we now have subgraphs $(\overleftrightarrow{K_{n(n-1)}}, \hat 1)$. We thus have $\frac{n^p-n}{n(n-1)}$ $(\overleftrightarrow{K_{n(n-1)}}, \hat 1)$ subgraphs. 
        \end{enumerate}
Case 2 tells us that the $(\overleftrightarrow{K_{n(n-1)}}, \hat 1)$ subgraphs are disjoint. However, we have an edge from each nontrivial coloring to each trivial coloring since equation~(\ref{eq:6}) is always satisfied if $m'_i=0$ for all $i$. This gives the desired result.

\end{proof}

\subsection*{Acknowledgements}
Mohamed Elhamdadi was partially supported by Simons Foundation collaboration grant 712462.

  \end{document}